\newtheorem{condition**}{A*}
\newtheorem{condition***}{C*}
\newtheorem{condition*}{C}
\newtheorem{definition}{Definition}[section]
\newtheorem{theorem}{Theorem}[section]
\newtheorem{lemma}{Lemma}[section]
\newtheorem{assumption}{Assumption}[section]
\renewcommand{\vec}{\mathrm{vec}}
\begin{document}

\title{Stochastic Linear Quadratic Optimal Control Problem: A Reinforcement Learning Method}

\author{Na Li, Xun Li, Jing Peng and Zuo Quan Xu % <-this % stops a space
\thanks{Corresponding Author. N. Li is with School of Statistics,
Shandong University of Finance and Economics, Jinan, Shandong, 250014, China (e-mail: naibor@163.com). N. Li acknowledges the financial support by the NSFC (No.~11801317), and the Natural Science Foundation of Shandong Province (No.~ZR2019MA013) and the Colleges and Universities Youth Innovation Technology Program of Shandong Province (No.~2019KJI011).}
% <-this % stops a space
\thanks{ X. Li is with Department of Applied Mathematics, The Hong Kong Polytechnic University, Hong Kong, China (e-mail: li.xun@polyu.edu.hk). X. Li acknowledges the financial support by the Research Grants Council of Hong Kong under grant (15213218, 15215319 and 15216720), and the PolyU-SDU Joint Research Centre on Financial Mathematics.}
\thanks{J. Peng with Department of Applied Mathematics, The Hong Kong Polytechnic University, Hong Kong, China (e-mail:jing.peng@connect.polyu.hk). }
\thanks{ Z. Xu is with Department of Applied Mathematics, The Hong Kong Polytechnic University, Hong Kong, China (e-mail: maxu@polyu.edu.hk). Z. Xu acknowledges the financial support by NSFC (No.~11971409), and Hong Kong GRF (No.~15202817), the PolyU-SDU Joint Research Center on Financial Mathematics and the CAS AMSS-POLYU Joint Laboratory of Applied Mathematics, The Hong Kong Polytechnic University.}
}

% make the title area
\maketitle

% As a general rule, do not put math, special symbols or citations
% in the abstract or keywords.
\begin{abstract}
This paper applies a reinforcement learning (RL) method to solve infinite horizon continuous-time stochastic linear quadratic problems, where drift and diffusion terms in the dynamics may depend on both the state and control. Based on Bellman's dynamic programming principle, an online RL algorithm is presented to attain the optimal control with just partial system information. This algorithm directly computes the optimal control rather than estimating the system coefficients and solving the related Riccati equation. It just requires local trajectory information, greatly simplifying the calculation processing. Two numerical examples are carried out to shed light on our theoretical findings. 
\end{abstract}

% Note that keywords are not normally used for peerreview papers.
\begin{IEEEkeywords}
Reinforcement learning, stochastic optimal control, linear quadratic problem.
\end{IEEEkeywords}

% For peer review papers, you can put extra information on the cover
% page as needed:
% \ifCLASSOPTIONpeerreview
% \begin{center} \bfseries EDICS Category: 3-BBND \end{center}
% \fi
%
% For peerreview papers, this IEEEtran command inserts a page break and
% creates the second title. It will be ignored for other modes.
\IEEEpeerreviewmaketitle

\section{Introduction}
Reinforcement learning (RL) is a hot topic in machine learning research, with roots in animal learning and early learning control work. 
Unlike other %types of 
machine learning techniques such as supervised learning and unsupervised learning,  RL method focuses on optimizing the reward without explicitly exploiting the hidden structure. 
Two key features distinguish this approach: trial-and-error search and delayed rewards. 
One discovers the best strategy through trial and error, and his actions affect not only the immediate reward but also all later rewards. 
In this approach, the controller must first {\it exploit} his experience to give the control and then, based on the reward, {\it explore} new strategies for the future. The most significant challenge is the trade-off between exploitation and exploration. Please see \cite{Mendel-McLaren-1970, Sutton-Barto-2018,Wang-Zariphopoulou-Zhou-2019} for details.  

On the other hand, optimal control, along with regulation and tracking problems, is among the most important research topics in control theory (\cite{Berkovitz-1974,Yong-Zhou-1999}). When the appropriate model is not available, indirect and direct adaptive control techniques are utilized to provide the best control. The indirect method seeks to discover the system's structure and then derives the optimal control using the discovered system's information. By contrast, the direct method does not identify the structure of the system; instead, it adjusts the control directly to make the error between the plant output and desired output tend to zero asymptotically (see, Narendra and Valavani \cite{Narendra-Valavani-1978}). 

According to Sutton {\em et al.} \cite{Sutton-Barto-Williams-1991}, RL method may be seen as a direct approach to optimal control problems, as it computes the optimal controls directly without the need for the structure of the system. The significance of RL method is that it provides a new adaptive structure, which successively reinforces the reward function such that the adaptive controller converges to the optimal control. In comparison, indirect adaptive techniques must first estimate the system's structure before determining the control, which is intrinsically complicated.

Linear quadratic (LQ) problem is an important class of optimal control problems in both theory and practice, for it may reasonably simulate many nonlinear problems. This paper proposes an RL algorithm to solve stochastic LQ (SLQ) optimal control problems.

\subsection{{Related Work}} 
For deterministic optimal control problems, RL techniques have been extensively explored under both discrete-time and continuous-time frameworks. For instance, 
Bradtke {\em et al.} \cite{Bradtke-Ydestie-Barto-1994} gave a Q-learning policy iteration for a discrete-time LQ problem by the so-called Q-function (Watkins\cite{Watkins-1989}, Werbos \cite{Werbos-1989}). 
Q-learning is a widely used RL technique. For its recent applications to discrete-time models, we refer to Rizvi and Lin \cite{Rizvi-Lin-2019}, Luo {\em et al.} \cite{Luo-Liu-Wu-2018}, Kiumarsia {\em et al.} \cite{Kiumarsia-AlQaudi-Modaresa-Lewis-Levinee-2019}. Baird \cite{Baird-1994} firstly used RL approach to obtain the optimal control for a continuous-time discrete-state system. Murray {\em et al.} \cite{Murray-Cox-Lendaris-Saeks-2002}  proposed an iterative adaptive dynamic programming (ADP) scheme for nonlinear systems. 
Recently, a number of new RL methods are developed for optimal control problems in continuous-time cases { (e.g., \cite{Vrabie-Pastravanu-Abu-Khalaf-Lewis-2009, Jiang-Jiang-2015, Modares-Lewis-Jiang-2016, Yaghmaie-Braun-2019, Bian-Jiang-2019, Lee-Sutton-2021})}. 
Vrabie {\em et al.} \cite{Vrabie-Pastravanu-Abu-Khalaf-Lewis-2009} { proposed} a new policy iteration technique for continuous-time linear systems under partial information. Jiang and Jiang \cite{Jiang-Jiang-2015} studied a type of nonlinear polynomial system and proposed a novel ADP based on the Hamilton-Jacobi-Bellman (HJB) equation of a relaxed problem. 
Modares {\em et al.} \cite{Modares-Lewis-Jiang-2016} designed a model-free off-policy RL algorithm for a linear continuous-time system. Their method is also applicable to regulation and tracking problems. 
We refer to Kiumarsi {\em et al.} \cite{Kiumarsi-Vamvoudakis-Modares-Lewis-2018} and Chen {\em et al.} \cite{Chen-Qu-Tang-Low-Li-2021} for more related works.

An important approach to obtaining optimal control of SLQ problems on the infinite horizon is to solve the related stochastic algebraic Riccati equation (SARE). 
Ait Rami and Zhou \cite{Rami-Zhou-2000} tackled an indefinite SLQ control problem using analytical and computational approaches to treating the related SARE by semidefinite programming (SDP). Later, Huang {\em et al.} \cite{Huang-Li-Yong-2015} solved a kind of mean-field SLQ problem on the infinite horizon by SDP, which involves two coupled SAREs. Moreover, Sun and Yong \cite{Sun-Yong-2018} proved that the admissible control set is non-empty for every initial state, equivalent to the control system's stabilizability. Because SAREs are dependent on the coefficients in the dynamics and the cost functional, the algorithms based on SAREs must be implemented offline. 

Duncan {\em et al.} \cite{Duncan-Guo-Pasik-Duncan-1999} studied an SLQ problem for a linear diffusion system,
where coefficients of the drift term are not known, and the diffusion term is independent of the state and control. Their method is indirect: first adopt a weighted least squares algorithm to estimate the dynamics' coefficients and then give an adaptive LQ Gaussian control. Recently, academics have been increasingly interested in studying SLQ problems using RL techniques, even though a number of applications is highly restricted compared to deterministic problems. 
Wong and Lee \cite{Wong-Lee-2010} considered a discrete-time SLQ problem with white Gaussian signals by Q-learning. Their method is a direct approach. 
 Fazel {\em et al.} \cite{Fazel-Ge-Kakade-Mesbahi-2018} studied a time-homogeneous LQ regulator (LQR) problem with a random initial state. They found the optimal policy by a model-free local search method. 
The method provides the global convergence for the decent gradient methods and a higher convergence rate than the naive gradient method. 
Later, Mohammadi {\em et al.} \cite{Mohammadi-Soltanolkotabi-Jovanovic-2020} gave a random search method with two-point gradient estimates for continuous-time LQR problems. They improved the related works on the required function evaluations and simulation time. 
Wang {\em et al.} \cite{Wang-Zariphopoulou-Zhou-2019} applied RL technique to a non-linear stochastic continuous-time diffusion system based on the classical relaxed stochastic control { (see, for example, \cite{Fleming-Nisio-1984,Zhou-1992})}. The optimum trade-off between investigating the black box environment and using present information is accomplished. Following up them, Wang and Zhou \cite{Wang-Zhou-2019} developed a continuous-time mean-variance (MV) portfolio optimization problem. They derived a Gaussian feedback exploration policy. 
\subsection{{Motivation}}
We consider the model in this study primarily for two reasons, which will be stated separately in the following two paragraphs. 

The most notable advantage of LQ framework is that the optimal controls can usually be expressed in an explicit closed-form. To get the optimal control, one only needs to solve the related Riccati equation such as Ait Rami and Zhou \cite{Rami-Zhou-2000}. This approach requires all the information of the system. However, we sometimes only know the observation of the state process rather than all of the system's characteristics. Therefore, the SDP method may be impractical. 
As earlier mentioned, RL techniques may directly generate the optimal control using only the trajectory information. This motivates us to build a new RL algorithm to directly compute the optimal control rather than solving the Riccati problem. More precisely, the RL algorithm can learn what to do based on data along the trajectories; no complete system knowledge is required to implement our algorithm. 

As mentioned above, Duncan {\em et al.} \cite{Duncan-Guo-Pasik-Duncan-1999} studied an SLQ problem where the diffusion term is independent of the state and control. In financial and economic practice, however, decision makers' actions usually impact the trend of the system (drift term) and the uncertainty of the system (diffusion term). Therefore, it is necessary to consider the case where the diffusion term is affected by both the state and control. This motivates us to analyze a more comprehensive linear system where drift and diffusion terms depend on the state and control in this paper. The problem can also be viewed as the scenario where multiplicative noises are present in the state and control. Noises frequently have a multiplicative effect on various plant components; see a practical example in \cite{Ugrinovskii-1998}. 
Due to the presence of control in the diffusion term, the weighting matrix $R$ in the problem is allowed to be indefinite, which is a crucial instance in both theory and practice; see, for example, Chen {\em et al.} \cite{Chen-Li-Zhou-1998}, and \cite{Yong-Zhou-1999}. Although we explore the problem in this paper under the positive definite condition, the results established can naturally extend to the indefinite case.
\subsection{{Contribution}}
Inspired by the above observations, this paper develops an online RL algorithm to solve SLQ problems over infinite time horizon, primarily using stochastic Bellman dynamic programming (DP) rather than solving the related Riccati equation. The algorithm computes the best control based only on local trajectories rather than on the system's structure. In other words, our algorithm only focuses on getting the optimal control and does not intend to model the internal structure of the system. In practice, the controller only needs partial information of the system dynamics to get the optimal control by updating policy and improving the evaluator based on the online data of state trajectories. Our main contributions are stated as follows.
\begin{enumerate}[(i)]
\item The policy iteration is implemented along the trajectories online using only partial information of the system. To the best of our knowledge, this is the first time to study the SLQ problem for It\^o type continuous-time system with state and control in diffusion term by an RL method. As a byproduct, the solution of the Riccati equation is derived without solving the equation itself. %By contrast, \cite{Rami-Zhou-2000} solved the Riccati equation directly by the SDP method given all the information of the system. 
\item Our algorithm only needs the local exploration on the time interval $[t,t+\Delta t]$,  with $t\geq0$ and $\Delta t>0$ being arbitrarily chosen. The stochastic DP allows us to adopt the optimality equation as the policy evaluation to reinforce the {\em target function} on a short interval $[t,t+\Delta t]$, rather than reinforce the cost functional on the entire infinite time horizon $[t, +\infty)$. This just requires local trajectory information, which greatly simplifies the calculation processing.
\item It is proved that, given a mean-square stabilizable control at initial, all the following up controls are stabilizable by our policy improvement. In contrast, Wang {\em et al.} \cite{Wang-Zariphopoulou-Zhou-2019} did not discuss the stabilizable issue. The convergence of the controls in our RL algorithm is also proved. 
\item Our RL algorithm is partially model-free, similar to Fazel {\em et al.} \cite{Fazel-Ge-Kakade-Mesbahi-2018} and Mohammadi {\em et al.} \cite{Mohammadi-Soltanolkotabi-Jovanovic-2020} that studied the problems with the random initial state in discrete-time and continuous-time, respectively. Differently, we study the It\^o type linear system with diffusion term and deterministic initial state. Moreover, the SLQ problem is also different from \cite{Wong-Lee-2010}, in which the system is only disturbed by white Gaussian signals. 
\end{enumerate}
The rest of this paper is organized as follows. Section \ref{LQ problem} presents an SLQ problem and gives an online RL algorithm to compute its optimal feedback control. We also discuss the properties such as stabilizable and convergence of the algorithm. 
We implement the algorithm and provide two numerical examples in Section \ref{Implementation}. 

{\it Notation.} Let $\mathbb N$ denote the set of positive integers. Let $n, m, L, K\in
\mathbb N$ be given. We denote by $\mathbb R^n$ the $n$-dimensional
Euclidean space with the norm $\|\cdot\|$. Let $\mathbb R^{n\times m}$ be the set of all
$n\times m$ real matrices. 
We denote by $A^{\top}$ the transpose of a vector or matrix $A$. Let $\mathcal S^n$ be the collection of all symmetric matrices in $\mathbb R^{n\times n}$. As usual, if a
matrix $A\in \mathcal S^n$ is positive semidefinite (resp. positive
definite), we write
$A\geq 0$ (resp. $>0$). All the positive
semidefinite (resp. positive
definite) matrices are collected by
$\mathcal S^n_+$ (resp. $\mathcal S^n_{++}$). 
If $A$, $B\in \mathcal S^n$, then we write $A\geq B$ (resp. $>$) if $A-B\geq 0$ (resp. $>0$). Denote $s, t\geq0$ as the time on finite horizon. 
Let $(\Omega, \mathcal F,
\mathbb P, \mathbb F)$ be a complete filtered probability space on
which a one-dimensional standard Brownian motion $W(\cdot)$ is
defined with $\mathbb F\equiv\{\mathcal F_t\}_{t\geq 0}$ being its
natural filtration augmented by all $\mathbb P$-null sets. We define the Hilbert space 
$L^2_{\mathbb F}(\mathbb R^n)$, which is the space of $\mathbb R^n$-valued $\mathbb F$-progressively measurable processes $\varphi(\cdot)$ with the finite norm $\|\varphi(\cdot)\|=\left[\mathbb E\int_{t}^\infty |\varphi(s)|^2 ds\right]^{\frac{1}{2}}<\infty$.  Furthermore, $\bf O$ denotes zero matrices with appropriate dimensions, and $\varnothing$ denotes the empty set.
\section{Online Algorithm for Stochastic LQ Optimal Control Problem}\label{LQ problem}

%\begin{equation*}
%\left\{
%\begin{aligned}
%dX(s)&=\left[AX(s)+Bu(s)\right]ds\\
%&~~~+\sum_{k=1}^M\left[C_kX(s)+D_ku(s)\right]dW_k(s),~~~s\geq 0,\\
%X(0)&=a,
%\end{aligned}
%\right.
%\end{equation*}

In this paper, we consider the following time-invariant stochastic linear dynamical control system 
\begin{equation}\label{system}
\left\{
\begin{aligned}
dX(s)&=\left[AX(s)+Bu(s)\right]ds\\&~~~\qquad+\left[CX(s)+Du(s)\right]dW(s),~~~s\geq t,\\
X(t)&=x\in\mathbb R^{n}, 
\end{aligned}
\right.
\end{equation}
where the coefficients $A$, $C\in\mathbb R^{n\times n}$ and $B$, $D\in\mathbb R^{n\times m}$ are constant matrices. The state process $X(\cdot)$ is an $n$-dimensional vector, the control $u$ is an $m$-dimensional vector, and $X(t)=x$ is the deterministic initial value. On the right hand of the system \eqref{system}, the first term is called the {\em drift term}, and the second term is called the {\em diffusion term}. Here, the
dimension of Brownian motion is set to be one just for simplicity of
notation, and the case of multi-dimensional Brownian motion can be
dealt with in the same way. We also denote this system by $[A,C; B,D]$ for simplicity.

\begin{definition}\label{Def} The system $[A,C; B,D]$ is called mean-square stabilizable (with respect to $x$) if there exists a constant matrix $K\in\mathbb R^{m\times n}$ such that the (unique) strong solution of 
\begin{equation}
\left\{
\begin{aligned}
dX(s)&=\left(A+BK\right)X(s)dt\\
&~~~\qquad+\left(C+DK\right)X(s)dW(s),~~~s\geq t,\\
X(t)&=x
\end{aligned}
\right.
\end{equation}
satisfies $\lim_{s\rightarrow\infty}\mathbb E[X(s)^\top X(s)]=0$. In this case, $K$ is called a stabilizer of the system $[A,C; B,D]$ and the feedback control $u(\cdot)=KX(\cdot)$ is called stabilizing. The set of all stabilizers is denoted by $\mathcal X=\mathcal X([A,C; B,D])$.
\end{definition}

The following assumption is used to avoid trivial cases. 
\begin{assumption}\label{ass1}
The system \eqref{system} is mean-square stabilizable, {\it i.e.},
\[
\mathcal X([A,C; B,D])\neq \varnothing. 
\]
\end{assumption}

The following result provides an equivalent condition for the existence of the stabilizers for the system \eqref{system}, please refer to Theorem 1 in \cite{Rami-Zhou-2000} or Lemma 2.2 in \cite{Sun-Yong-2018}.

\begin{lemma}\label{lemma-stabilizer} 
A matrix $K\in\mathbb R^{m\times n}$ is a stabilizer of the system $[A,C; B,D]$ if and only if 
there exists a matrix $P\in\mathcal S_{++}^n$ such that 
\begin{multline*}
(A+BK)^\top P+P(A+BK)\\
+(C+DK)^\top P(C+DK)<0.
\end{multline*}
In this case, for any $\Lambda\in\mathcal S^n$ (resp., $\mathcal S_{+}^n$, $\mathcal S_{++}^n$), the following Lyapunov equation
\begin{multline*}
(A+BK)^\top P+P(A+BK)\\
+(C+DK)^\top P(C+DK)+\Lambda=0
\end{multline*}
admits a unique solution $P\in\mathcal S^n$ (resp., $\mathcal S_{+}^n$, $\mathcal S_{++}^n$). 
\end{lemma}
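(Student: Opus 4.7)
The plan is to work with the closed-loop matrices $\tilde A := A+BK$ and $\tilde C := C+DK$, and the Lyapunov operator $\mathcal L(P) := \tilde A^\top P + P\tilde A + \tilde C^\top P\tilde C$ acting on $\mathcal S^n$. The workhorse identity comes from It\^o's formula applied to $X(s)^\top P X(s)$ along the closed-loop trajectory, yielding
\[
\mathbb E[X(s)^\top P X(s)] - x^\top P x = \mathbb E\int_t^s X(r)^\top \mathcal L(P) X(r)\, dr, \quad (\star)
\]
and this single identity will drive both directions of the equivalence as well as the Lyapunov part.

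For sufficiency, I would suppose $P\in\mathcal S^n_{++}$ with $\mathcal L(P)<0$. Since both $P$ and $-\mathcal L(P)$ are positive definite, there is some $c>0$ with $\mathcal L(P)\le -cP$, so $g(s):=\mathbb E[X(s)^\top P X(s)]$ satisfies $g'(s)\le -c\,g(s)$ by differentiating $(\star)$; a Gronwall argument then forces exponential decay of $g$, whence $\mathbb E[\|X(s)\|^2]\le g(s)/\lambda_{\min}(P)\to 0$.

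For necessity and the Lyapunov equation, I would first upgrade the qualitative stability $\mathbb E[\|X(s)\|^2]\to 0$ supplied by Definition \ref{Def} to \emph{uniform exponential decay}. The matrix $M(s):=\mathbb E[X(s)X(s)^\top]$ obeys the linear time-invariant ODE $\dot M = \tilde A M + M\tilde A^\top + \tilde C M\tilde C^\top$, so stability of its zero equilibrium is automatically exponential. This makes
\[
V(x) := \mathbb E\int_t^\infty X(r)^\top \Lambda X(r)\,dr
\]
well-defined for every $\Lambda\in\mathcal S^n$ and quadratic in $x$, so $V(x)=x^\top P_\Lambda x$ for some $P_\Lambda\in\mathcal S^n$. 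Time-invariance yields the dynamic programming identity
\[
x^\top P_\Lambda x = \mathbb E\!\left[\int_t^{t+h}\! X(r)^\top\Lambda X(r)\,dr + X(t+h)^\top P_\Lambda X(t+h)\right],
\]
and differentiating at $h=0$ via $(\star)$ produces $\mathcal L(P_\Lambda)+\Lambda=0$. If $\Lambda\ge 0$ (resp.\ $>0$) then the integrand in $V(x)$ is $\ge 0$ (resp.\ $>0$ for $x\neq 0$), so $P_\Lambda\in\mathcal S^n_+$ (resp.\ $\mathcal S^n_{++}$); taking $\Lambda=I$ produces the $P\in\mathcal S^n_{++}$ with $\mathcal L(P)=-I<0$ required by the ``only if'' direction.

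Uniqueness of $P_\Lambda$ reduces to showing $\mathcal L(P)=0$ forces $P=0$: applying $(\star)$ with such a $P$ and sending $s\to\infty$ using the exponential decay gives $x^\top P x=0$ for every $x$. The main obstacle I anticipate is precisely the exponential-decay upgrade from Definition \ref{Def}, because everything downstream depends on the integral defining $V$ being finite and on the $s\to\infty$ limits vanishing; linearity of the ODE satisfied by $M(s)$, together with finite-dimensionality of $\mathcal S^n$, is what makes that step essentially automatic.
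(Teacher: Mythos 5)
Your proof is correct. Note that the paper does not prove this lemma at all --- it is quoted from Theorem~1 of Ait Rami and Zhou and Lemma~2.2 of Sun and Yong --- and your argument is essentially the standard one used there: It\^o's formula giving the identity $(\star)$, the sufficiency direction via a Gronwall/comparison bound, and the necessity and Lyapunov-equation parts via the linear matrix ODE $\dot M=\tilde A M+M\tilde A^\top+\tilde C M\tilde C^\top$ for the second-moment matrix together with the integral representation $P_\Lambda$. You correctly isolate the only nontrivial step, the upgrade to exponential decay; your justification (the rank-one initial conditions $xx^\top$ span $\mathcal S^n$, so decay of every solution of a finite-dimensional linear time-invariant ODE forces a negative spectral abscissa) is sound, with the caveat that it uses decay from \emph{every} initial state, i.e.\ the ``for all $x$'' reading of Definition~\ref{Def} --- which is indeed the reading under which the lemma is true, since for a single fixed $x$ the LMI direction can fail (already in the deterministic case $C=D=\mathbf O$, where $x$ could lie in a proper stable subspace of $A+BK$).
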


This result shows that the set $\mathcal X([A,C; B,D])$ is, in fact, independent of the initial state $x$.
When the system $[A,C; B,D]$ is mean-square stabilizable, we define the corresponding set of admissible controls as
\begin{equation*}
\mathcal U_{ad}=\{u(\cdot)\in L_{\mathbb F}^2(\mathbb R^m): u(\cdot){\rm ~is~stabilizing}\}.
\end{equation*}
In this paper, we consider a quadratic cost functional given by 
\begin{multline}\label{cost}
J(t,x;u(\cdot))\\
=\mathbb E^{\mathcal F_t}\int_{t}^\infty\Big[X(s)^\top QX(s)+2u(s)^\top S X(s) +u(s)^\top Ru(s)\big]ds,
\end{multline}
where $Q$, $S$, and $R$ are given constant matrices of proper sizes. 

\noindent{\bf Problem (SLQ).} Given $t\geq 0$ and $x\in\mathbb R^n$, find a control $u^*(\cdot)\in\mathcal U_{ad}$ such that
\[
J(t,x;u^*(\cdot))=\inf_{u(\cdot)\in \mathcal U_{ad}}J(t,x;u(\cdot))\triangleq V(t, x),
\]
where $V(t,x)$ is called the value function of Problem (SLQ). 

Problem (SLQ) is called well-posed at $(t,x)$ if $V(t,x)>-\infty$.
A well-posed problem is called {\it attainable} if there is a control $u^*(\cdot)\in\mathcal U_{ad}$ such that $J(t,x;u^*(\cdot))=V(t,x)$. In this case, $u^*(\cdot)$ is called an {\it optimal control} and the corresponding solution of \eqref{system}, $X^*(\cdot)$ is called the
{\it optimal trajectory} (corresponding to $u^*(\cdot)$), and
$(X^*(\cdot), u^*(\cdot))$ is called an {\it optimal pair}.

If $R>0$ and $Q-S^\top R^{-1}S\geq 0$, then $V(t,x)\geq 0$ so that Problem (SLQ) is well-posed for any given $t\geq 0$ and $x\in\mathbb R^n$. If $R>0$ and $Q-S^\top R^{-1}S=0$, then
\begin{multline*} 
J(t,x;u(\cdot))\\
=\mathbb E^{\mathcal F_t}\int_{t}^\infty\big[(S X(s)+Ru(s))^{\top} R^{-1}(S X(s)+R u(s))\big]ds\geq 0, 
\end{multline*}
Clearly, $0$ is a lower bound and it is achieved evidently by the unique optimal control 
$u^{*}(\cdot)=-R^{-1}SX(\cdot)$.
From now on, we focus on the following case. 
\begin{assumption}\label{ass2}
	$R>0$ and $Q-S^\top R^{-1}S>0$.
\end{assumption}

%\begin{remark}
%By the results in \cite{Rami-Zhou-2000}, when $D\neq \bf O$ in the system \eqref{system}, the weighting matrices can be assumed indefinite. For simplicity, we only study the positive definite case in this paper. 
%\end{remark}

%
%If $u^*(\cdot)\in\mathcal U_{ad}$, then by Bellman's DP, for any $(t, x)\in [0,\infty)\times\mathbb R^n$, we have
%\[
%\begin{aligned}
%V(t,x)&=\inf_{u(\cdot)\in\mathcal U_{ad}}\mathbb E^{\mathcal F_t}\Big\{\int_t^{t+\Delta t}\Big[X(s)^\top QX(s)\\
%&~~~~~~~~~+2u(s)^\top S X(s)+u(s)^\top Ru(s)\Big]ds\\
%&~~~~~~~~~+V(t+\Delta t, X(t+\Delta t))\Big\},
%\end{aligned}
%\] 
%where $\Delta t$ is any positive constant. 

The following result is well known; please see Theorem 3.3 in Chapter 4 of \cite{Yong-Zhou-1999} or Theorem 13 in \cite{Rami-Zhou-2000}.

\begin{lemma}\label{lemma-0}
Suppose $P\in\mathcal S^n_{++}$ satisfies the following Lyapunov equation
\begin{multline}
(A+BK)^\top P+P(A+BK)\\
+(C+DK)^\top P(C+DK)\\
+K^\top RK+S^\top K+K^\top S+Q=0
\end{multline}\label{Riccati}
where $$K=-(R+D^\top P D)^{-1}(B^\top P+D^\top PC+S ).$$
%\begin{equation}\label{K}
%	K=-(R+D^\top P D)^{-1}(B^\top P+D^\top PC+S ).
%\end{equation} 
Then $u(\cdot)=KX(\cdot)$ is an optimal control of Problem (SLQ) and $V(t,x)=x^\top Px.$ Moreover, we have the Bellman's DP: 
\begin{multline}\label{Bellman}
x^\top Px
=\mathbb E^{\mathcal F_t}\Big\{\int_t^{t+\Delta t}X(s)^\top\Big[ Q+2K^\top S
+K^\top RK\Big]X(s)ds\\
+X(t+\Delta t)^\top PX(t+\Delta t)\Big\},
\end{multline}
for any constant $\Delta t>0$.
\end{lemma}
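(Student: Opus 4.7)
The plan is to prove both the optimality claim and the Bellman DP identity in one stroke, by applying It\^o's formula to the process $s\mapsto X(s)^\top P X(s)$ for an arbitrary admissible control, rearranging the resulting integrand via a completion of squares, and exploiting the Lyapunov/Riccati identity. First I would compute, for any $u(\cdot)\in\mathcal U_{ad}$ with state $X(\cdot)$,
\begin{equation*}
d\bigl(X^\top P X\bigr) = \bigl[\,X^\top(A^\top P+PA+C^\top PC)X + 2u^\top(B^\top P+D^\top PC)X + u^\top D^\top PD\,u\,\bigr]ds + dM(s),
\end{equation*}
where $M(\cdot)$ is a local martingale. Adding the running cost $X^\top QX+2u^\top SX+u^\top Ru$ to this drift and setting $\tilde R:=R+D^\top PD>0$, I would collect the terms involving $u$ into $u^\top\tilde R u+2u^\top(B^\top P+D^\top PC+S)X$ and complete the square against $KX$, using the defining identity $\tilde R K = -(B^\top P+D^\top PC+S)$. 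The $X$-quadratic leftover is exactly $Q+A^\top P+PA+C^\top PC-K^\top\tilde RK$, which the Lyapunov equation for $P$ forces to vanish; this is the key algebraic cancellation.

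With this cancellation in hand, integrating from $t$ to $T$ and taking conditional expectation kills the martingale part and yields
\begin{equation*}
\mathbb E^{\mathcal F_t}\!\!\int_{t}^{T}\!\!\bigl[X^\top QX+2u^\top SX+u^\top Ru\bigr]ds + \mathbb E^{\mathcal F_t}\!\bigl[X(T)^\top P X(T)\bigr] - x^\top P x = \mathbb E^{\mathcal F_t}\!\!\int_{t}^{T}(u-KX)^\top \tilde R\,(u-KX)\,ds.
\end{equation*}
Next I would verify that $K$ is itself a stabilizer: by Assumption \ref{ass2}, $\Lambda:=K^\top RK+S^\top K+K^\top S+Q=(K+R^{-1}S)^\top R(K+R^{-1}S)+Q-S^\top R^{-1}S\in\mathcal S^n_{++}$, so Lemma \ref{lemma-stabilizer} applied in reverse tells us $K\in\mathcal X$. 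Hence $u(\cdot)=KX(\cdot)$ is admissible, and for every $u\in\mathcal U_{ad}$ the stabilizability of the closed loop gives $\mathbb E[X(T)^\top PX(T)]\to 0$ as $T\to\infty$ (boundedness of $P$ and $\mathbb E[X(T)^\top X(T)]\to 0$). Letting $T\to\infty$ then produces
\begin{equation*}
J(t,x;u(\cdot)) - x^\top P x = \mathbb E^{\mathcal F_t}\!\!\int_{t}^{\infty}(u-KX)^\top\tilde R\,(u-KX)\,ds\ge 0,
\end{equation*}
with equality precisely at $u=KX$, which delivers both $V(t,x)=x^\top Px$ and the optimality of $u^*=KX$.

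Finally, the Bellman identity \eqref{Bellman} drops out by specializing the same identity to $u(\cdot)=KX(\cdot)$ and integrating only up to $t+\Delta t$ instead of to $\infty$: the right-hand side vanishes, and the running cost simplifies using $2X^\top K^\top SX=X^\top(K^\top S+S^\top K)X$ to the symmetric form $X^\top[Q+2K^\top S+K^\top RK]X$ as written. The main obstacle I anticipate is purely technical, namely justifying that the local martingale $M(\cdot)$ is a true martingale on $[t,T]$ and that the limit $\mathbb E[X(T)^\top PX(T)]\to 0$ indeed holds under the admissibility condition; both are standard given $u\in L^2_{\mathbb F}$ and Definition \ref{Def}, so the core of the argument really lives in the algebraic cancellation tying the Lyapunov equation to the completion of squares, and everything else is bookkeeping.
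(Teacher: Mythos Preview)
Your argument is correct and is exactly the standard completion-of-squares verification: apply It\^o to $X^\top P X$, add the running cost, complete the square against $KX$ using $\tilde R K=-(B^\top P+D^\top PC+S)$, observe that the Lyapunov equation kills the residual $X$-quadratic term, then integrate and pass to the limit. Your check that $K$ is a stabilizer via $\Lambda=K^\top RK+S^\top K+K^\top S+Q>0$ together with $P>0$ and Lemma~\ref{lemma-stabilizer} is also fine, and the Bellman identity \eqref{Bellman} is indeed just the finite-horizon version of the same computation at $u=KX$.

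For comparison, the paper does not actually give a proof of this lemma: it states the result as well known and refers to Theorem~3.3 in Chapter~4 of \cite{Yong-Zhou-1999} and Theorem~13 in \cite{Rami-Zhou-2000}. Those references carry out essentially the same It\^o-plus-completion-of-squares computation you outline, so your proposal matches the intended (cited) argument. The only point worth tightening in a write-up is the one you already flag: since every $u\in\mathcal U_{ad}$ is by definition a feedback $u=K'X$ with $K'$ a stabilizer, Definition~\ref{Def} gives $\mathbb E[X(T)^\top X(T)]\to 0$ directly, and the square-integrability of the closed-loop state on finite intervals makes the stochastic integral a true martingale; no extra work is needed.
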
 

Our key observation is that, based on \eqref{Bellman}, to compute the value function $V$ is equivalent to calculate $P$. We only need to know the local state trajectories $X(\cdot)$ on $[t,t+\Delta t]$, therefore it requires us to provide the reasonable online algorithm to solve Problem (SLQ). Indeed, the value of $P$ can be inferred from \eqref{Bellman} by the local trajectories of $X(\cdot)$. 
On the other hand, we can also compute $P$ by the following expression 
\begin{equation}\label{value-f}
x^\top Px=\mathbb E^{\mathcal F_t}\int_t^\infty X(s)^\top\left[ Q+2K^\top S+K^\top RK\right]X(s)ds,
\end{equation} 
which is obtained by sending $\Delta t$ to infinity in \eqref{Bellman}. 
But it requires the entire state trajectories $X(\cdot)$ on $[t,\infty)$.

 At each iteration $i$ ($i=1,2,\cdots$), the state trajectory is denoted by $X^{(i)}$ corresponding to the control law $K^{(i)}$.
Now, we present Algorithm \ref{algorithm} as follows. 

\begin{algorithm}\caption{\small {\it Policy Iteration for Problem (SLQ)}}
1: {\bf Initialization:} Select any stabilizer $K^{(0)}$ for the system \eqref{system}.\\
2: Let $i=0$ and $\varepsilon>0$.\\
3: {\bf do} $\{$\\
4: Obtain local state trajectories $X^{(i)}$ by running system \eqref{system} with $K^{(i)}$ on $[t,t+\Delta t]$.\\
5: {\bf Policy Evaluation} (Reinforcement): Solve $P^{(i+1)}$ from the identity 
\begin{align}
&x^{\top} P^{(i+1)}x-\mathbb E^{\mathcal F_t}\big[X^{(i)}(t+\Delta t)^\top P^{(i+1)}X^{(i)}(t+\Delta t)\big]\nonumber\\&=\mathbb E^{\mathcal F_t}\int_t^{t+\Delta t}X^{(i)}(s)^{\top}\Big[ Q+2K^{(i)\top} S\nonumber\\&~~~\qquad\qquad\qquad\qquad\qquad+K^{(i)\top} RK^{(i)}\Big]X^{(i)}(s)ds.
\label{algorithm 1-evaluation}
\end{align}\\
6: {\bf Policy Improvement} (Update): Update $K^{(i+1)}$ by the formula 
\begin{equation}\label{algorithm 1-improvement}
\begin{aligned}
K^{(i+1)}&=-(R+D^\top P^{(i+1)} D)^{-1}(B^\top P^{(i+1)}\\
&~~~~~~+D^\top P^{(i+1)}C+S ).
\end{aligned} 
\end{equation}

7:  $i\leftarrow i+1$\\
8: $\}$ {\bf until} $\|P^{(i+1)}-P^{(i)}\|<\varepsilon$.\label{algorithm}
\end{algorithm}

Algorithm \ref{algorithm} is an online algorithm based on local state trajectories, reinforced by Policy Evaluation \eqref{algorithm 1-evaluation} and updated by Policy Improvement \eqref{algorithm 1-improvement}. 
Algorithm \ref{algorithm} has three significant advantages over the offline algorithm:
(i) The observation period consisting of an initial time $t\in[0,\infty)$ and a length $\Delta t>0$ can be freely chosen; (ii)  Different from \eqref{value-f} exploring the entire state space on the whole interval $[t,\infty)$, we only need to record local observations of the state on the short period $[t,t+\Delta t]$, which dramatically reduces the computation at each iteration; (iii) This algorithm can be implemented without using the information of $A$ in the system $[A,C;B,D]$, so it is {\em partially model-free}. Especially when $D=\bf O$, Algorithm \ref{algorithm} can be implemented without using the information of $A$ and $C$.

\begin{lemma}\label{lemma-1}
Suppose that Assumption \ref{ass2} holds and the system $[A,C;B,D]$ is stabilizable with $K^{(i)}$. Then solving Policy Evaluation \eqref{algorithm 1-evaluation} in Algorithm \ref{algorithm} is equivalent to solving Lyapunov Recursion
\begin{equation}\label{Lyapunov Recursion}
\begin{aligned}
&(A+BK^{(i)})^\top P^{(i+1)}+P^{(i+1)}(A+BK^{(i)})\\
&~~~+(C+DK^{(i)})^\top P^{(i+1)}(C+DK^{(i)})\\
&~~~+K^{(i)\top} RK^{(i)}+S^\top K^{(i)}+K^{(i)\top} S+Q=0.
\end{aligned}
\end{equation}
\end{lemma}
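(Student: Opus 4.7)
The plan is to establish the equivalence by applying It\^o's formula to $X^{(i)}(s)^\top P^{(i+1)} X^{(i)}(s)$ along the closed-loop dynamics $dX^{(i)}=(A+BK^{(i)})X^{(i)}ds+(C+DK^{(i)})X^{(i)}dW$. This will convert the integral identity \eqref{algorithm 1-evaluation} into a pointwise algebraic identity on $n\times n$ matrices, which is precisely \eqref{Lyapunov Recursion}.

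I would begin by noting that stabilizability of $K^{(i)}$ together with the boundedness of the coefficients guarantees that $X^{(i)}$ has finite moments of all orders on $[t,t+\Delta t]$, so the martingale term of the It\^o expansion has vanishing conditional expectation. It\^o's formula yields
\begin{align*}
&\mathbb{E}^{\mathcal F_t}\bigl[X^{(i)}(t+\Delta t)^\top P^{(i+1)} X^{(i)}(t+\Delta t)\bigr]-x^\top P^{(i+1)}x\\
&\quad=\mathbb{E}^{\mathcal F_t}\int_t^{t+\Delta t} X^{(i)}(s)^\top M\,X^{(i)}(s)\,ds,
\end{align*}
where $M:=(A+BK^{(i)})^\top P^{(i+1)}+P^{(i+1)}(A+BK^{(i)})+(C+DK^{(i)})^\top P^{(i+1)}(C+DK^{(i)})$. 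Using the symmetrization $X^\top 2K^{(i)\top}SX=X^\top(S^\top K^{(i)}+K^{(i)\top}S)X$ and writing $N:=Q+S^\top K^{(i)}+K^{(i)\top}S+K^{(i)\top}RK^{(i)}$, identity \eqref{algorithm 1-evaluation} becomes equivalent to
$$\mathbb{E}^{\mathcal F_t}\int_t^{t+\Delta t} X^{(i)}(s)^\top(M+N)X^{(i)}(s)\,ds=0.$$

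The forward direction is then immediate: if $P^{(i+1)}\in\mathcal S^n$ satisfies \eqref{Lyapunov Recursion}, then $M+N=0$ and the integrand vanishes identically. For the converse direction, I would interpret \eqref{algorithm 1-evaluation} as holding for every initial value $x\in\mathbb R^n$, as is required for $P^{(i+1)}$ to be uniquely identified by the algorithm. Dividing by $\Delta t$ and letting $\Delta t\to 0^+$, the continuity of $s\mapsto\mathbb{E}^{\mathcal F_t}[X^{(i)}(s)^\top(M+N)X^{(i)}(s)]$ at $s=t$ combined with $X^{(i)}(t)=x$ forces $x^\top(M+N)x=0$ for every $x\in\mathbb R^n$. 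Since $P^{(i+1)}$ is symmetric, so is $M+N$, hence $M+N=0$, which is exactly \eqref{Lyapunov Recursion}.

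The main obstacle is this reverse direction: extracting a pointwise matrix identity from a single scalar integral condition. The argument relies essentially on the arbitrariness of $x$ (equivalently, on polarization of a quadratic form) together with the symmetry of $P^{(i+1)}$, because $x^\top \Phi x=0$ for all $x$ forces $\Phi=0$ only when $\Phi$ is symmetric. A secondary technical point is justifying the vanishing of the conditional expectation of the stochastic integral, which is routine from standard moment bounds for linear SDEs on the bounded interval $[t,t+\Delta t]$.
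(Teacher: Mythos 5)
Your proposal is correct and follows essentially the same route as the paper: It\^o's formula converts the integral identity into a quadratic-form condition, the forward direction is immediate, and the converse is obtained by dividing by $\Delta t$, letting $\Delta t\to 0^+$, and using the arbitrariness of the state value together with symmetry to force the matrix to vanish. The only cosmetic difference is that the paper runs the limit argument at an arbitrary intermediate time $\tau$ (arguing $x_\tau$ ranges over $\mathbb R^n$) rather than over arbitrary initial states $x$, and it additionally records that the resulting $P^{(i+1)}$ lies in $\mathcal S^n_{++}$ via Lemma \ref{lemma-stabilizer}.
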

\begin{proof}
Suppose $K^{(i)}$ is a stabilizer for the system \eqref{system}. 
By Assumption 2.2, 
\[
\begin{aligned}
	&~~~K^{(i)\top} RK^{(i)}+S^\top K^{(i)}+K^{(i)\top} S+Q\\
	&=Q-S^\top R^{-1}S+(RK^{(i)}+S)^\top R^{-1}(RK^{(i)}+S)>0.
	\end{aligned}\] 
By Lemma \ref{lemma-stabilizer}, Lyapunov Recursion \eqref{Lyapunov Recursion} admits a unique solution $P^{(i+1)}\in\mathcal S^n_{++}$. 

Inserting the feedback control $u^{(i)}(\cdot)=K^{(i)}X^{(i)}(\cdot)$ into the system \eqref{system}
%\begin{equation*}\label{systemfeedback}
%\left\{
%\begin{aligned}
%dX^{(i)}(s)&=\left(A+BK^{(i)}\right)X^{(i)}(s)ds\\
%&~~~+\left(C+DK^{(i)} \right)X^{(i)}(s)dW(s),\quad s\geq t,\\
%X^{(i)}(t)&=x.
%\end{aligned}
%\right.
%\end{equation*}
and applying It\^o's formula to $X^{(i)}(\cdot)^\top P^{(i+1)}X^{(i)}(\cdot)$, we have
\begin{align}
&\quad~d\left[X^{(i)}(s)^\top P^{(i+1)}X^{(i)}(s)\right]\nonumber\\
&=\bigg\{X^{(i)}(s)^\top\Big[(A+BK^{(i)})^\top P^{(i+1)}+P^{(i+1)}(A+BK^{(i)})\nonumber\\
&~~~+(C+DK^{(i)})^\top P^{(i+1)}(C+DK^{(i)})\Big]X^{(i)}(s)\bigg\}ds\nonumber\\
&~~~+\left\{...\right\}dW(s).\label{ito formula}
\end{align}
 Integrating from $t$ to $t+\Delta t$, we have
\begin{align*}
&\quad~X^{(i)}(t+\Delta t)^\top P^{(i+1)}X^{(i)}(t+\Delta t)-x^\top P^{(i+1)}x\nonumber\\
&=\int_t^{t+\Delta t}X^{(i)} (s)^\top\Big[(A+BK^{(i)})^\top P^{(i+1)}+P^{(i+1)}(A+BK^{(i)})\nonumber\\
&~~~+(C+DK^{(i)})^\top P^{(i+1)}(C+DK^{(i)})\Big]X^{(i)}(s)ds\nonumber\\
&~~~+\int_t^{t+\Delta t}\left\{...\right\}dW(s).
\end{align*}
Since $\mathbb E^{\mathcal F_t}\int_t^{t+\Delta t}\left\{...\right\}dW(s)=0$, 
taking conditional expectation $\mathbb E^{\mathcal F_t}$ on both sides, one gets
\begin{align}
&\quad~\mathbb E^{\mathcal F_t}[X^{(i)}(t+\Delta t)^\top P^{(i+1)}X^{(i)}(t+\Delta t)]-x^\top P^{(i+1)}x\nonumber\\
&=\mathbb E^{\mathcal F_t}\!\!\int_t^{t+\Delta t}\!\!\!\!X^{(i)} (s)^\top\Big[(A+BK^{(i)})^\top P^{(i+1)}+P^{(i+1)}(A+BK^{(i)})\nonumber\\
&~~~+(C+DK^{(i)})^\top P^{(i+1)}(C+DK^{(i)})\Big]X^{(i)}(s)ds.\label{integrating}
\end{align}
From Lyapunov Recursion \eqref{Lyapunov Recursion}, we have
%\begin{equation*}
%\begin{aligned}
%&~~~(A+BK^{(i)})^\top P^{(i+1)}+P^{(i+1)}(A+BK^{(i)})\\
%&~~~~~~+(C+DK^{(i)})^\top P^{(i+1)}(C+DK^{(i)})\\
%&=-\Big[K^{(i)\top} RK^{(i)}+S^\top K^{(i)}+K^{(i)\top} S+Q\Big].
%\end{aligned}
%\end{equation*}
%Taking it into \eqref{integrating} yields 
\[
\begin{aligned}
&~~~\mathbb E^{\mathcal F_t}[X^{(i)}(t+\Delta t)^\top P^{(i+1)}X^{(i)}(t+\Delta t)]-x^\top P^{(i+1)}x\\
&=-\mathbb E^{\mathcal F_t}\int_t^{t+\Delta t}\Big\{X^{(i)}(s)^\top\Big[Q+2K^{(i)\top} S\\
&~~~\qquad\qquad\qquad\qquad+K^{(i)\top} RK^{(i)}\Big]X^{(i)}(s)\Big\}ds,
\end{aligned}
\]
which confirms Policy Evaluation \eqref{algorithm 1-evaluation}.

On the other hand, 
if $P^{(i+1)}\in\mathcal S^n$ is the solution of \eqref{algorithm 1-evaluation}, for any constant $\tau > t$, a calculation similar to \eqref{integrating} gives
{\small\begin{align}
&\mathbb E^{\mathcal F_\tau}\int_\tau^{\tau+\Delta t}X^{(i)} (s)^\top\Big[(A+BK^{(i)})^\top P^{(i+1)}+P^{(i+1)}(A+BK^{(i)})\nonumber\\
&~~\qquad+(C+DK^{(i)})^\top P^{(i+1)}(C+DK^{(i)})\Big]X^{(i)}(s)ds\nonumber\\
&+\mathbb E^{\mathcal F_\tau}\int_\tau^{\tau+\Delta t}\Big\{X^{(i)}(s)^\top\Big[Q+2K^{(i)\top} S\nonumber\\
&~\qquad~\qquad~\qquad~\qquad+K^{(i)\top} RK^{(i)}\Big]X^{(i)}(s)\Big\}ds=0.\label{combining}
\end{align}}
Dividing $\Delta t$ on both sides of \eqref{combining}, we see 
 
{\begin{align*}
&\frac{1}{\Delta t}\mathbb E^{\mathcal F_\tau}\int_\tau^{\tau+\Delta t}\Big\{X^{(i)} (s)^\top\Big[(A+BK^{(i)})^\top P^{(i+1)}\\
&+P^{(i+1)}(A+BK^{(i)}) +(C+DK^{(i)})^\top P^{(i+1)}(C+DK^{(i)})\\
&+Q+2K^{(i)\top} S +K^{(i)\top} RK^{(i)}\Big]X^{(i)}(s)\Big\}ds=0.
\end{align*}}
Let $\Delta t\rightarrow 0$ and denote the state at time $\tau$ by $x_{\tau}$, then 
\begin{equation*}\label{Lyapunov x}
\begin{aligned}
&x_\tau^\top\Big[(A+BK^{(i)})^\top P^{(i+1)}+P^{(i+1)}(A+BK^{(i)})\\
&~~~+(C+DK^{(i)})^\top P^{(i+1)}(C+DK^{(i)})\\
&~~~+K^{(i)\top} RK^{(i)}+S^\top K^{(i)}+K^{(i)\top} S+Q\Big]x_\tau=0.
\end{aligned}
\end{equation*}
Because $x_\tau$ can be taken any value in $\mathbb R^n$, Lyapunov Recursion \eqref{Lyapunov Recursion} holds. 
%\begin{multline*} 
%(A+BK^{(i)})^\top P^{(i+1)}+P^{(i+1)}(A+BK^{(i)})\\
%+(C+DK^{(i)})^\top P^{(i+1)}(C+DK^{(i)})\\
%+K^{(i)\top} RK^{(i)}+S^\top K^{(i)}+K^{(i)\top} S+Q=0.
%\end{multline*}
%Hence, $P^{(i+1)}$ satisfies 
%Lyapunov Recursion \eqref{Lyapunov Recursion}. 
By Lemma \ref{lemma-stabilizer} and 
\[
	K^{(i)\top} RK^{(i)}+S^\top K^{(i)}+K^{(i)\top} S+Q>0,
\]
we have $P^{(i+1)}\in\mathcal S^n_{++}$. 
\end{proof}

By Lemma \ref{lemma-1}, solving Lyapunov Recursion \eqref{Lyapunov Recursion} with Policy Improvement \eqref{algorithm 1-improvement} is equivalent to solving Policy Evaluation \eqref{algorithm 1-evaluation}; that is, they admit the same solution $P^{(i+1)}$ at each iteration. 
The latter has a significant advantage over the former in that it does not necessitate knowing all the system's information. Indeed, the information of coefficient $A$ is  embedded in the state trajectories $X^{(i)}$ online, so we can use the observation of state trajectories to reinforce \eqref{algorithm 1-evaluation} without knowing $A$ in our algorithm. The coefficients $B$, $C$, and $D$ are used to update the control law $K^{(i)}$ in Policy Improvement \eqref{algorithm 1-improvement}. In particular, $C$ is not required to know when $D=\bf O$.

Once initializing a stabilizer $K^{(0)}$ in Algorithm \ref{algorithm}, one first runs the system repeatedly with the control $K^{(i)}$ from the initial state $x$ and records the resultant state trajectories $X^{(i)}$ on interval $[t,t+\Delta t]$ to reinforce the \emph{target function}:
{\small\begin{equation}\label{target function}
\begin{aligned}
&~~~\Delta J^{(i)}(t,t+\Delta t;X^{(i)},K^{(i)})\\
&:=\mathbb E^{\mathcal F_t}\Big\{\int_t^{t+\Delta t}X^{(i)}(s)^{\top}\Big[ Q+2K^{(i)\top} S+K^{(i)\top} RK^{(i)}\Big]X^{(i)}(s)ds\Big\}.
\end{aligned}
\end{equation}}
Then one solves $P^{(i+1)}$ by \eqref{algorithm 1-evaluation} and obtains an updated control $K^{(i+1)}$ by \eqref{algorithm 1-improvement}. This procedure is iterated until it converges. In this procedure, 
$\{K^{(i)}\}_{i=1}^\infty$ should be the stabilizers of the system $[A,C;B,D]$ of adaptive process at each iteration, i.e., it is necessary to require that $K^{(i)}$ is stepwise stable. The following lemma illustrates the stepwise stable property of $K^{(i)}$. 

\begin{theorem}\label{th-2}
Suppose that Assumptions \ref{ass1} and \ref{ass2} hold. Also suppose $K^{(0)}$ is a stabilizer for the system $[A,C;B,D]$. Then all the policies $\{K^{(i)}\}_{i=1}^\infty$ updated by \eqref{algorithm 1-improvement} are stabilizers. Moreover, the solution $P^{(i+1)}\in \mathcal S_{++}^n$ in Algorithm \ref{algorithm} is unique at each step. 
\end{theorem}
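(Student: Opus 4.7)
The plan is to proceed by induction on $i$, with the base case $K^{(0)}\in\mathcal X$ given by hypothesis. Assume that $K^{(i)}$ is a stabilizer of $[A,C;B,D]$. By Assumption \ref{ass2} the matrix
\[
K^{(i)\top}RK^{(i)}+S^\top K^{(i)}+K^{(i)\top}S+Q
=Q-S^\top R^{-1}S+(RK^{(i)}+S)^\top R^{-1}(RK^{(i)}+S)
\]
is in $\mathcal S^n_{++}$, so Lemma \ref{lemma-stabilizer} applied to the Lyapunov equation \eqref{Lyapunov Recursion} yields a \emph{unique} $P^{(i+1)}\in\mathcal S^n_{++}$; by Lemma \ref{lemma-1} this is precisely the solution of the Policy Evaluation step \eqref{algorithm 1-evaluation}, which establishes the uniqueness claim at iteration $i+1$.

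To prove that the updated feedback $K^{(i+1)}$ produced by \eqref{algorithm 1-improvement} is again a stabilizer, I would verify the hypothesis of Lemma \ref{lemma-stabilizer} with the same $P^{(i+1)}$. For fixed $P\in\mathcal S^n$ introduce
\[
F_P(K):=(A+BK)^\top P+P(A+BK)+(C+DK)^\top P(C+DK)+K^\top RK+S^\top K+K^\top S+Q,
\]
and perform a completing-the-square calculation in $K$. With $M:=R+D^\top PD$ and $L:=B^\top P+D^\top PC+S$, grouping the terms linear and quadratic in $K$ gives
\[
F_P(K)=F_P(K^*)+(K-K^*)^\top M(K-K^*),\qquad K^*:=-M^{-1}L.
\]
Applying this identity with $P=P^{(i+1)}$, for which $K^*=K^{(i+1)}$ by the definition \eqref{algorithm 1-improvement}, together with the fact that $F_{P^{(i+1)}}(K^{(i)})=0$ by the Lyapunov recursion, yields
\[
F_{P^{(i+1)}}(K^{(i+1)})=-(K^{(i)}-K^{(i+1)})^\top(R+D^\top P^{(i+1)}D)(K^{(i)}-K^{(i+1)})\leq 0.
\]
Subtracting the purely quadratic-in-$K^{(i+1)}$ part yields
\begin{align*}
&(A+BK^{(i+1)})^\top P^{(i+1)}+P^{(i+1)}(A+BK^{(i+1)})+(C+DK^{(i+1)})^\top P^{(i+1)}(C+DK^{(i+1)})\\
&\quad=-(K^{(i)}-K^{(i+1)})^\top(R+D^\top P^{(i+1)}D)(K^{(i)}-K^{(i+1)})\\
&\qquad-\bigl[K^{(i+1)\top}RK^{(i+1)}+S^\top K^{(i+1)}+K^{(i+1)\top}S+Q\bigr].
\end{align*}
The first term on the right is negative semidefinite because $R>0$ and $P^{(i+1)}\succ 0$; the second bracket is strictly positive definite by the same completion-of-squares argument used in the base case together with Assumption \ref{ass2}. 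The sum is therefore strictly negative definite, so Lemma \ref{lemma-stabilizer} gives $K^{(i+1)}\in\mathcal X([A,C;B,D])$, closing the induction.

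The main technical point to get right is the completing-the-square identity for $F_P$, since both the drift term $BK$ and the diffusion term $DK$ contribute quadratic pieces in $K$, which is why the effective weight is $M=R+D^\top PD$ rather than $R$. Once that is set up cleanly, the rest is bookkeeping: the induction hypothesis provides $F_{P^{(i+1)}}(K^{(i)})=0$, and Assumption \ref{ass2} is what upgrades the inequality to strict negative definiteness, which is exactly what Lemma \ref{lemma-stabilizer} requires to conclude that $K^{(i+1)}$ stabilizes the system. Uniqueness of $P^{(i+1)}\in\mathcal S^n_{++}$ at every step then follows automatically from the uniqueness clause of Lemma \ref{lemma-stabilizer} applied along the induction.
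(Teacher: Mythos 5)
Your proof is correct and follows essentially the same route as the paper's: an induction in which Lemma \ref{lemma-stabilizer} supplies the unique $P^{(i+1)}\in\mathcal S^n_{++}$ (via the positivity of $K^{(i)\top}RK^{(i)}+S^\top K^{(i)}+K^{(i)\top}S+Q$ under Assumption \ref{ass2}), and a completion of squares in $K$ with effective weight $R+D^\top P D$ shows the updated gain satisfies the strict Lyapunov inequality required by that lemma. The paper carries out the same completion of squares term by term in its display \eqref{stabilizer}; your identity $F_P(K)=F_P(K^*)+(K-K^*)^\top M(K-K^*)$ is simply a cleaner packaging of that computation.
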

\begin{proof}
Because $K^{(0)}$ is a stabilizer for the system $[A,C;B,D]$, by the same argument as the proof of Lemma \ref{lemma-1}, there exists a unique solution $P^{(i+1)}\in\mathcal S^n_{++}$ of Lyapunov Recursion \eqref{Lyapunov Recursion} with $i=0$.

We prove by mathematical induction. Suppose $i\geq 1$, $K^{(i-1)}$ is a stabilizer and $P^{(i)}\in\mathcal S^n_{++}$ is the unique solution of Lyapunov Recursion \eqref{Lyapunov Recursion}.
We now show $K^{(i)}=-(R+D^\top P^{(i)} D)^{-1}(B^\top P^{(i)}+D^\top P^{(i)}C+S )$ is also a stabilizer and $P^{(i+1)}\in\mathcal S^n_{++}$.  To this end, we first notice 
\begin{equation}\label{stabilizer}
\begin{aligned}
&\quad~(A+BK^{(i)})^\top P^{(i)}+P^{(i)}(A+BK^{(i)})\\
&\quad~\quad~+(C+DK^{(i)})^\top P^{(i)}(C+DK^{(i)})\\
&=(A+BK^{(i-1)})^\top P^{(i)}+P^{(i)}(A+BK^{(i-1)})\\
&~~~\qquad+(C+DK^{(i-1)})^\top P^{(i)}(C+DK^{(i-1)})\\
&~~~-\Big[(K^{(i-1)}-K^{(i)})^\top B^\top P^{(i)}+P^{(i)}B(K^{(i-1)}-K^{(i)})\\
&~~~\qquad+(C+DK^{(i-1)})^\top P^{(i)}(C+DK^{(i-1)})\\
&~~~\qquad\qquad-(C+DK^{(i)})^\top P^{(i)}(C+DK^{(i)}) \Big]\\
&=-\big[K^{(i-1)\top} RK^{(i-1)}+S^\top K^{(i-1)}+K^{(i-1)}S+Q\big]\\
&~~~-\Big[(K^{(i-1)}-K^{(i)})^\top B^\top P^{(i)}+P^{(i)}B(K^{(i-1)}-K^{(i)})\\
&~~~\qquad+(K^{(i-1)}-K^{(i)})^\top D^\top P^{(i)}D(K^{(i-1)}-K^{(i)})\\
&~~~\qquad+(K^{(i-1)}-K^{(i)})^\top D^\top P^{(i)}(C+DK^{(i)})\\
&~~~\qquad+(C+DK^{(i)})^\top P^{(i)}D(K^{(i-1)}-K^{(i)})\Big]\\
&=-\big[K^{(i-1)\top} RK^{(i-1)}+S^\top K^{(i-1)}+K^{(i-1)\top}S+Q\big]\\
&~~~\qquad-(K^{(i-1)}-K^{(i)})^\top D^\top P^{(i)}D(K^{(i-1)}-K^{(i)})\\
&~~~-\Big[(K^{(i-1)}-K^{(i)})^\top \big[B^\top P^{(i)}+D^\top P^{(i)}(C+DK^{(i)})\big]\\
&~~~+\big[P^{(i)}B+(C+DK^{(i)})^\top P^{(i)}D\big]^\top(K^{(i-1)}-K^{(i)})\Big]. 
\end{aligned}
\end{equation}
From Policy Improvement \eqref{algorithm 1-improvement}, 
\begin{equation*}\label{K-i+1}
B^\top P^{(i)}+D^\top P^{(i)}C=-(R+D^\top P^{(i)}D)K^{(i)}-S.
\end{equation*}
%\begin{multline}\label{stabilizer}
%(A+BK^{(i)})^\top P^{(i)}+P^{(i)}(A+BK^{(i+1)})\\+(C+DK^{(i)})^\top P^{(i)}(C+DK^{(i)})\\
%=(A+BK^{(i-1)})^\top P^{(i)}+P^{(i)}(A+BK^{(i-1)})\\+(C+DK^{(i-1)})^\top P^{(i)}(C+DK^{(i-1)})\\
%-\Big[(K^{(i-1)}-K^{(i)})^\top B^\top P^{(i)}+P^{(i)}B(K^{(i-1)}-K^{(i)})\\+(C+DK^{(i-1)})^\top P^{(i)}(C+DK^{(i-1)})\\
%-(C+DK^{(i)})^\top P^{(i)}(C+DK^{(i)}) \Big]\\
%=-\big[K^{(i-1)\top} RK^{(i-1)}+S^\top K^{(i-1)}+K^{(i-1)}S+Q\big]-\Big[(K^{(i-1)}-K^{(i)})^\top B^\top P^{(i)}\\
%+P^{(i)}B(K^{(i-1)}-K^{(i)})+(K^{(i-1)}-K^{(i)})^\top D^\top P^{(i)}D(K^{(i-1)}-K^{(i)})\\
%+(K^{(i-1)}-K^{(i)})^\top D^\top P^{(i)}(C+DK^{(i)})+(C+DK^{(i)})^\top P^{(i)}D(K^{(i-1)}-K^{(i)})\Big]\\
%=-\big[K^{(i-1)\top} RK^{(i-1)}+S^\top K^{(i-1)}+K^{(i-1)}S+Q\big]-(K^{(i-1)}-K^{(i)})^\top D^\top P^{(i)}D(K^{(i-1)}-K^{(i)})\\
%-\Big[(K^{(i-1)}-K^{(i)})^\top \big[B^\top P^{(i)}+D^\top P^{(i)}(C+DK^{(i)})\big]\\
%+\big[P^{(i)}B+(C+DK^{(i)})^\top P^{(i)}D\big]^\top(K^{(i-1)}-K^{(i)})\Big].\\ 
%\end{multline}
%
%From Policy Improvement \eqref{algorithm 1-improvement}, 
%\begin{equation}\label{K-i+1}
%B^\top P^{(i)}+D^\top P^{(i)}C=-(R+D^\top P^{(i)}D)K^{(i)}-S.
%\end{equation}
Plugging this into \eqref{stabilizer} and using $Q-S^\top R^{-1}S>0$, we obtain
\begin{equation*}\label{stabilizer2}
\begin{aligned}
&\quad~(A+BK^{(i)})^\top P^{(i)}+P^{(i)}(A+BK^{(i)})\\
&\quad\quad+(C+DK^{(i)})^\top P^{(i)}(C+DK^{(i)})\\
&=-\big[K^{(i)} RK^{(i)}+S^\top K^{(i)}+K^{(i)\top}S+Q\big]\\
&\quad\quad-(K^{(i-1)}-K^{(i)})^\top(R+ D^\top P^{(i)}D)(K^{(i-1)}-K^{(i)})\\
&=-\big[Q-S^\top R^{-1}S+(K^{(i)}+R^{-1}S)^\top R(K^{(i)}+R^{-1}S)\big]\\
&\quad\quad-(K^{(i-1)}-K^{(i)})^\top (R+D^\top P^{(i)}D)(K^{(i-1)}-K^{(i)})\\
&<0.
\end{aligned}
\end{equation*} 
So $K^{(i)}$ is a stabilizer by Lemma \ref{lemma-stabilizer}. Moreover, Lyapunov Recursion \eqref{Lyapunov Recursion} 
%\begin{equation}
%\begin{aligned}
%&~(A+BK^{(i)})^\top P^{(i+1)}+P^{(i+1)}(A+BK^{(i)})\\
%&~~~+(C+DK^{(i)})^\top P^{(i+1)}(C+DK^{(i)})\\
%&~~~+K^{(i)\top} RK^{(i)}+S^\top K^{(i)}+K^{(i)}S+Q=0 \end{aligned}
%\end{equation}
admits a unique solution $P^{(i+1)}\in\mathcal S^n_{++}$ since 
\begin{multline*}
	K^{(i)\top} RK^{(i)}+S^\top K^{(i)}+K^{(i)\top} S+Q\\=Q-S^\top R^{-1}S+(RK^{(i)}+S)^\top R^{-1}(RK^{(i)}+S)>0.
\end{multline*} 
From Lemma \ref{lemma-1}, Lyapunov Recursion \eqref{Lyapunov Recursion} is equivalent to Policy Evaluation \eqref{algorithm 1-evaluation}, so $P^{(i+1)}\in\mathcal S^n_{++}$ is the unique solution in Algorithm \ref{algorithm}.
\end{proof} 

Now, we prove the convergence of Algorithm \ref{algorithm}. 

\begin{theorem}\label{theorem 1}
The iteration $\{P^{(i)}\}_{i=1}^{\infty}$ of Algorithm \ref{algorithm} converges to the solution $P\in\mathcal S^n_{++}$ of the following SARE:
\begin{equation}\label{SARE}
\begin{aligned}
&A^\top P+PA^\top+C^\top PC+Q\\
&~~~-(PB+C^\top PD+S^\top)(R+D^\top P D)^{-1}\\
&\qquad\qquad\times(B^\top P+D^\top PC+S )=0.
\end{aligned}
\end{equation}
Also, the optimal control of Problem (SLQ) is 
\begin{equation}\label{optimal u}
u^*=-(R+D^\top P D)^{-1}(B^\top P+D^\top PC+S )X^*,
\end{equation}
where $X^*(\cdot)$ is determined by
\[
\left\{
\begin{aligned}
dX^*(s)&=\left(A+BK\right)X^*(s)ds\\
&~~~+\left(C+DK\right)X^*(s)dW(s),~~~s\geq t,\\
X^*(t)&=x,
\end{aligned}
\right.
\] 
with \begin{equation}\label{K}
K=-(R+D^\top P D)^{-1}(B^\top P+D^\top PC+S ).
\end{equation}Moreover, $K$ is a stabilizer of the system $[A,C;B,D]$.
\end{theorem}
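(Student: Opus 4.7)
The plan is to prove convergence through a monotonicity argument on $\{P^{(i)}\}$, pass to the limit in Lyapunov Recursion and Policy Improvement to obtain the SARE, and then invoke Lemma \ref{lemma-0} to extract the optimal control. First I would establish $P^{(i)} \geq P^{(i+1)}$ for all $i \geq 1$. By Theorem \ref{th-2} every $K^{(i)}$ is a stabilizer and every $P^{(i)} \in \mathcal{S}^n_{++}$, so I can subtract Lyapunov Recursion \eqref{Lyapunov Recursion} (at step $i$) from the algebraic identity \eqref{stabilizer} (re-used with the pair $(K^{(i-1)},K^{(i)})$) to obtain
\begin{equation*}
(A+BK^{(i)})^\top \Delta P + \Delta P(A+BK^{(i)}) + (C+DK^{(i)})^\top \Delta P(C+DK^{(i)}) = -(K^{(i-1)} - K^{(i)})^\top(R + D^\top P^{(i)} D)(K^{(i-1)} - K^{(i)}),
\end{equation*}
where $\Delta P := P^{(i)} - P^{(i+1)}$. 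The right-hand side is negative semidefinite, and since $K^{(i)}$ is a stabilizer, Lemma \ref{lemma-stabilizer} forces $\Delta P \geq 0$. Being a monotonically decreasing sequence bounded below by $\mathbf{O}$, $\{P^{(i)}\}$ converges entrywise to some $P \in \mathcal{S}^n_+$.

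Next, since $R + D^\top P^{(i+1)} D$ stays uniformly invertible along the iteration, the update \eqref{algorithm 1-improvement} is continuous in $P^{(i+1)}$, so $K^{(i)} \to K$ with $K$ as in \eqref{K}. Passing to the limit in Lyapunov Recursion \eqref{Lyapunov Recursion} produces
\begin{equation*}
(A+BK)^\top P + P(A+BK) + (C+DK)^\top P(C+DK) + K^\top R K + S^\top K + K^\top S + Q = 0,
\end{equation*}
into which I substitute the defining formula for $K$; standard completion-of-squares collapses the cross terms into the SARE \eqref{SARE}.

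The main obstacle is upgrading $P \geq 0$ to $P \in \mathcal{S}^n_{++}$, since a monotone limit of positive definite matrices is a priori only positive semidefinite. I would argue by contradiction: suppose $P x_0 = 0$ for some $x_0 \neq 0$. Then the two cross terms $x_0^\top P (A+BK) x_0$ and $x_0^\top (A+BK)^\top P x_0$ vanish, so pre- and post-multiplying the limiting Lyapunov equation above by $x_0$ leaves
\begin{equation*}
x_0^\top (C+DK)^\top P(C+DK) x_0 = - x_0^\top \bigl[Q + S^\top K + K^\top S + K^\top R K\bigr] x_0.
\end{equation*}
The bracketed matrix equals $Q - S^\top R^{-1} S + (RK + S)^\top R^{-1}(RK + S)$, which is strictly positive definite by Assumption \ref{ass2}, so the right-hand side is strictly negative while the left-hand side is nonnegative, a contradiction. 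Hence $P \in \mathcal{S}^n_{++}$. With $P > 0$ now in hand and $K^\top R K + S^\top K + K^\top S + Q > 0$, Lemma \ref{lemma-stabilizer} certifies that $K$ is a stabilizer of $[A,C;B,D]$. Finally, because $P \in \mathcal{S}^n_{++}$ solves the Lyapunov equation with precisely the feedback $K$ of \eqref{K}, Lemma \ref{lemma-0} delivers the optimal control \eqref{optimal u} and identifies $V(t,x) = x^\top P x$, completing the argument.
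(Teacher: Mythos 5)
Your proof is correct and follows essentially the same route as the paper: both derive the difference Lyapunov equation
$(A+BK^{(i)})^\top\Delta P+\Delta P(A+BK^{(i)})+(C+DK^{(i)})^\top\Delta P(C+DK^{(i)})+(K^{(i-1)}-K^{(i)})^\top(R+D^\top P^{(i)}D)(K^{(i-1)}-K^{(i)})=0$, invoke Lemma \ref{lemma-stabilizer} to get $\Delta P\geq 0$ and hence monotone convergence to some $P\geq 0$, pass to the limit in the Lyapunov recursion and the policy-improvement formula, and substitute $K$ to recover the SARE. Your only real deviations are cosmetic or mildly better: you obtain the key identity by reusing \eqref{stabilizer} instead of redoing the subtraction from scratch, and your explicit kernel/contradiction argument for $P\in\mathcal S^n_{++}$ is actually more careful than the paper's, which infers that $K$ is a stabilizer from the strict Lyapunov inequality while $P$ is at that point only known to be positive semidefinite (Lemma \ref{lemma-stabilizer} as stated requires $P\in\mathcal S^n_{++}$); your observation that $Px_0=0$ would force $0\leq x_0^\top(C+DK)^\top P(C+DK)x_0=-x_0^\top[Q-S^\top R^{-1}S+(RK+S)^\top R^{-1}(RK+S)]x_0<0$ is exactly the missing justification. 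Concluding optimality via Lemma \ref{lemma-0} rather than via the cited external result is equally legitimate.
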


\begin{proof}
From Lemma \ref{lemma-1}, Algorithm \ref{algorithm} is equivalent to Lyapunov Recursion \eqref{Lyapunov Recursion} with Policy Improvement \eqref{algorithm 1-improvement}. We now prove $\{P^{(i)}\}_{i=1}^{\infty}$ in \eqref{Lyapunov Recursion} combining with \eqref{algorithm 1-improvement} 
converges to the solution $P$ of SARE \eqref{SARE}. Note $P^{(i+1)}$ satisfies Lyapunov Recursion \eqref{Lyapunov Recursion}. 
%\begin{equation*}
%\begin{aligned}
%&~(A+BK^{(i)})^\top P^{(i+1)}+P^{(i+1)}(A+BK^{(i)})\\
%&~~~+(C+DK^{(i)})^\top P^{(i+1)}(C+DK^{(i)})\\
%&~~~~~~~+K^{(i)\top} RK^{(i)}+S^\top K^{(i)}+K^{(i)}S+Q=0. 
%\end{aligned} 
%\end{equation*}
Denote
$\Delta P^{(i+1)}=P^{(i)}-P^{(i+1)}$, and $\Delta K^{(i)}=K^{(i-1)}-K^{(i)}$ for $i=1,2,\cdots$.
Then 
\begin{equation}\label{Pi-Pi+1}
\begin{aligned}
0&=(A+BK^{(i-1)})^\top P^{(i)}+P^{(i)}(A+BK^{(i-1)})\\
&~~~~~~~~+(C+DK^{(i-1)})^\top P^{(i)}(C+DK^{(i-1)})\\
&~~~~~~~~~~~~~+K^{(i-1)^\top} RK^{(i-1)}+S^\top K^{(i-1)}+K^{(i-1)\top}S \\
&~~~-\big[(A+BK^{(i)})^\top P^{(i+1)}+P^{(i+1)}(A+BK^{(i)})\\
&~~~~~~~~+(C+DK^{(i)})^\top P^{(i+1)}(C+DK^{(i)})\\
&~~~~~~~~~~~~~+K^{(i)\top} RK^{(i)}+S^\top K^{(i)}+K^{(i)\top}S\big] \\
&=(A+BK^{(i)})^\top\Delta P^{(i+1)}+\Delta P^{(i+1)}(A+BK^{(i)})\\
&~~~~~~~~+(C+DK^{(i)})^\top \Delta P^{(i+1)}(C+DK^{(i)})\\
&~~~+\Delta K^{(i-1)^\top} B^\top P^{(i)}+P^{(i)}B\Delta K^{(i-1)}\\
&~~~~~~~~+(C+DK^{(i-1)})^\top P^{(i)}(C+DK^{(i-1)})\\
&~~~-(C+DK^{(i)})^\top P^{(i)}(C+DK^{(i)})\\
&~~~~~~~~+K^{(i-1)^\top} RK^{(i-1)}-K^{(i)\top} RK^{(i)}\\
&~~~~~~~~~~~~+S^\top\Delta K^{(i)}+\Delta K^{(i)}S.\\
\end{aligned}
\end{equation}
It follows from Policy Improvement \eqref{algorithm 1-improvement} that we have 
\begin{equation}\label{C}
\begin{aligned}
&~~~~(C+DK^{(i-1)})^\top P^{(i)}(C+DK^{(i-1)})\\
&~~~~~~~~~~~-(C+DK^{(i)})^\top P^{(i)}(C+DK^{(i)})\\
&=\Delta K^{(i)^\top} D^\top P^{(i)}D\Delta K^{(i)}\\
&~~~~~~~~~~~+\Delta K^{(i)^\top} D^\top P^{(i)}(C+DK^{(i)})\\
&~~~~~~~~~~~~~~~~~+(C+DK^{(i)})^\top P^{(i)}D\Delta K^{(i)}.
\end{aligned}
\end{equation}
Note 
\begin{equation}\label{Ki}
\begin{aligned} K^{(i-1)^\top} RK^{(i-1)}&-K^{(i)\top} RK^{(i)}=\Delta K^{(i)^\top} R\Delta K^{(i)}\\&~~~+\Delta K^{(i)^\top} RK^{(i)}+K^{(i)\top} R\Delta K^{(i)}.
\end{aligned}
\end{equation}
Combining \eqref{Pi-Pi+1}-\eqref{Ki}, we deduce 
{\small\begin{equation}
\begin{aligned}
&~(A+BK^{(i)})^\top\Delta P^{(i+1)}+\Delta P^{(i+1)}(A+BK^{(i)})\\
&\quad+(C+DK^{(i)})^\top \Delta P^{(i+1)}(C+DK^{(i)})\\
&\quad\quad\quad+\Delta K^{(i)^\top}(R+D^\top P^{(i)}D)\Delta K^{(i)}\\
&+\Delta K^{(i)^\top}\big[B^\top P^{(i)}+D^\top P^{(i)}C+S+(R+D^\top P^{(i)}D)K^{(i)}\big]\\
&+\big[B^\top P^{(i)}+D^\top P^{(i)}C+S+(R+D^\top P^{(i)}D)K^{(i)}\big]^\top \Delta K^{(i)}\\&=0.
\end{aligned}
\end{equation}}
By \eqref{algorithm 1-improvement}, we have
$$-(R+D^\top P^{(i)}D)K^{(i)}=B^\top P^{(i)}+D^\top P^{(i)}C+S, $$
so 
\begin{equation}\label{Delta}
\begin{aligned}
&(A+BK^{(i)})^\top\Delta P^{(i+1)}+\Delta P^{(i+1)}(A+BK^{(i)})\\
&~~~+(C+DK^{(i)})^\top \Delta P^{(i+1)}(C+DK^{(i)})\\
&~~~+\Delta K^{(i)^\top}(R+D^\top P^{(i)}D)\Delta K^{(i)}=0.
\end{aligned}
\end{equation}
Since $K^{(i)}$ is a stabilizer of the system \eqref{system} and $\Delta K^{(i)^\top}(R+D^\top P^{(i)}D)\Delta K^{(i)}\geq 0$, Lyapunov equation \eqref{Delta} admits a unique solution $\Delta P^{(i+1)}\geq 0$ by Lemma \ref{lemma-stabilizer}. Therefore, $\{P^{(i)}\}_{i=1}^{\infty}$ is monotonically decreasing. Notice $P^{(i)}> 0$, so $\{P^{(i)}\}_{i=1}^{\infty}$ converges to some $P\geq 0$. 

Next, we prove that $P$ is the solution of SARE \eqref{SARE}.
When $i\rightarrow\infty$, 
\begin{equation*}
\begin{aligned}
	&(R+D^\top P^{(i+1)} D)^{-1}(B^\top P^{(i+1)}+D^\top P^{(i+1)}C+S )\\
	&~~~\rightarrow (R+D^\top P D)^{-1}(B^\top P+D^\top PC+S ),
\end{aligned} 
\end{equation*}
which means that $\{K^{(i)}\}_{i=1}^{\infty}$ converges to $K$ given by \eqref{K}. Moreover, $(P,K)$ satisfies 
\begin{equation}\label{Lyapunov K}
\begin{aligned}
&(A+BK)^\top P+P(A+BK)+(C+DK)^\top P(C+DK)\\
&~~\qquad\qquad+K^\top RK+S^\top K+KS+Q=0.
\end{aligned}
\end{equation}
Since $K^\top RK+S^\top K+KS+Q>0$,
%\begin{multline*}
%K^\top RK+S^\top K+KS+Q\\=Q-S^\top RS+(K+R^{-1}S)^\top R(K+R^{-1}S) > 0,
%\end{multline*}
we get 
\[
(A+BK)^\top P+P(A+BK)+(C+DK)^\top P(C+DK)<0,
\]
which implies that $K$ is a stabilizer of the system \eqref{system}. By \eqref{Lyapunov K} and Lemma \ref{lemma-stabilizer}, $P\in\mathcal S^n_{++}$. Moreover, when plugging \eqref{K} into \eqref{Lyapunov K}, \eqref{Lyapunov K}  becomes SARE \eqref{SARE}. From Theorem 13 in \cite{Rami-Zhou-2000}, we see that \eqref{optimal u} is the unique optimal control. \end{proof}

\section{Online Implementation of Partially Model-Free RL Algorithm}\label{Implementation}
\subsection{Online Implementation}
In this section, we illustrate the implementation of Algorithm \ref{algorithm} in detail. Since there are $N:=\frac{n(n+1)}{2}$ independent parameters in the positive definite matrix $P^{(i+1)}$, we need to observe state along trajectories at least $N$ intervals $[t_j,t_j+\Delta t_j]$ with $j=1,2,\cdots,N$ on $[0,\infty)$ to reinforce target function $\Delta J^{(i)}(t_j,t_j+\Delta t_{j}; X^{(i)},K^{(i)})$ defined by \eqref{target function} with $j=1,2,\cdots,N$. From Policy Evaluation \eqref{algorithm 1-evaluation} in Algorithm \ref{algorithm}, for initial state $x_{t_j}$ at time $t_j$, one needs to solve a set of simultaneous equations
\begin{equation}\label{k equations}
\begin{aligned}
&x_{t_j}^\top P^{(i+1)}x_{t_j}\!\!-\mathbb E^{\mathcal F_{t_j}}\big[X^{(i)}(t_j+\Delta t_j)^\top P^{(i+1)}X^{(i)}(t_j+\Delta t_j)\big]\\
&=\Delta J^{(i)}(t_j,t_j+\Delta t_j; X^{(i)},K^{(i)}) \end{aligned}
\end{equation}
with $j=1,2,\cdots,N$ at each iteration $i$. Sometimes, we suppress $X^{(i)}$ and $K^{(i)}$ in target function \eqref{target function} to avoid heavy notation. 

We will use vectorization and Kronecker product theory to solve the above system \eqref{k equations}; see \cite{Murray-Cox-Lendaris-Saeks-2002} for details. 
Define $\vec (M)$ for $M\in\mathbb R^{n\times m}$ as a vectorization map from a matrix into an $nm$-dimensional column vector for compact representations, which stacks the columns of $M$ on top of one another. 
For example, \[\vec{
\left(\begin{bmatrix}
a_{11} & a_{12} \\ a_{21} & a_{22}\\ a_{31} & a_{32}
\end{bmatrix}\right)
}=(a_{11},a_{21},a_{31},a_{12},a_{22},a_{32})^{\top}.
\]

Let $A\otimes B$ be a Kronecker product of matrices $A$ and $B$, then we have $\vec (ABC)=(C^\top\otimes A)\vec (B)$. The set of equations \eqref{k equations} is transformed to 
{\tiny \begin{align}
\left(\left[ \begin{array}{cc} x^\top_{t_1}\otimes x^\top_{t_1} \\ x^\top_{t_2}\otimes x^\top_{t_2}\\
\vdots\\
x^\top_{t_N}\otimes x^\top_{t_N} \end{array}
\right] -\left[ \begin{array}{cc} \mathbb E ^{\mathcal F_{t_1}}[X^{(i)}(t_1+\Delta t_1)^\top\otimes X^{(i)}(t_1+\Delta t_1)^\top] \\ \mathbb E ^{\mathcal F_{t_2}}[X^{(i)}(t_2+\Delta t_2)^\top\otimes X^{(i)}(t_2+\Delta t_2)^\top] \\
\vdots\\
\mathbb E ^{\mathcal F_{t_N}}[X^{(i)}(t_N+\Delta t_N)^\top\otimes X^{(i)}(t_N+\Delta t_N)^\top] \end{array}
\right] \right)\nonumber\\
\times\;\vec (P^{(i+1)})=\left[\begin{array}{cc} \Delta J^{(i)}(t_1,t_1+\Delta t)\\ \Delta J^{(i)}(t_2,t_2+\Delta t_2)\\
\vdots\\
\Delta J^{(i)}(t_N,t_N+\Delta t_N) \end{array}
\right].\label{N-equations}
\end{align}}
Denote 
\[
\Delta X^{(i)}_j=x^\top_{t_j}\otimes x^\top_{t_j}-\mathbb E ^{\mathcal F_{t_j}}[X^{(i)}(t_j+\Delta t_j)^\top\otimes X^{(i)}(t_j+\Delta t_j)^\top],
\]
and
\[
\mathbb X^{(i)}=\left[ \begin{array}{cc} \Delta X^{(i)}_1 \\ \Delta X^{(i)}_2\\
\vdots\\
\Delta X^{(i)}_N\end{array}
\right],~~~~~\mathbb J^{(i)}=\left[\begin{array}{cc} \Delta J^{(i)}(t_1,t_1+\Delta t)\\ \Delta J^{(i)}(t_2,t_2+\Delta t)\\
\vdots\\
\Delta J^{(i)}(t_N,t_N+\Delta t_N) \end{array}
\right], \]
then \eqref{N-equations} is rewritten as
\begin{equation}\label{P-(i+1)}
\mathbb X^{(i)}\vec (P^{(i+1)})=\mathbb J^{(i)}.
\end{equation}

 In practice, we derive the expectation in \eqref{N-equations} by calculating the mean-value by $K$ sample paths $X_k$, $k=1,2,\cdots,K$. Precisely, we calculate 
\[
\begin{aligned}
	\mathbb E ^{\mathcal F_{t_j}}[X^{(i)}(t_j+\Delta t_j)^\top\otimes X^{(i)}(t_j+\Delta t_j)^\top]\\=\frac{1}{K}\sum_{k=1}^{K}[X_k^{(i)}(t_j+\Delta t_j)^\top\otimes X_k^{(i)}(t_j+\Delta t_j)^\top]\end{aligned}\] by the sampled data at terminal time $t_j+\Delta t_j$. In \eqref{P-(i+1)}, if we get $K$ sample paths with the data sampled at discrete time $t_l$ $(t_j\leq t_l\leq t_j+\Delta t_j, l=1,2, \cdots, L)$, we calculate $\Delta J^{(i)}$ in $\mathbb J^{(i)}$ as
\[
\begin{aligned}
&~~~\Delta J^{(i)}(t_j, t_j+\Delta t_j)\\&=\frac{1}{K}\sum_{k=1}^{K}\Big[\sum_{l=1}^L X_k^{(i)}(t_l)^{\top}\big[ Q+2K^{(i)\top} S+K^{(i)\top} RK^{(i)}\big]X_k^{(i)}(t_l)\Big].\end{aligned}\]

Moreover, we define an operator $\vec^+(P)$ for $P\in\mathcal S^{n}$, which maps $P$ into an $N$-dimensional vector by stacking the columns corresponding to the diagonal and lower triangular parts of $P$ on top of one another where the off-diagonal terms of $P$ are double. For example, 
\[\vec^+{
\left(\begin{bmatrix}
p_{11} & p_{12} & p_{13}\\ p_{12} & p_{22} & p_{23}\\ p_{13} & p_{23} & p_{33}
\end{bmatrix}\right)
}=(p_{11},2p_{12},2p_{13},p_{22}, 2p_{23},p_{33})^{\top}.
\]
Similar to \cite{Murray-Cox-Lendaris-Saeks-2002}, there exists a matrix $\mathcal T\in\mathbb R^{n^2\times N}$ with $\mathrm{rank}(\mathcal T)=N$ such that $\vec (P)=\mathcal T\vec^+(P)$ for any $P\in\mathcal S^n$. Then equation \eqref{P-(i+1)} becomes 
\begin{equation}\label{P-(i+1)-2}
\big(\mathbb X^{(i)}\mathcal T\big) \vec^+(P^{(i+1)})=\mathbb J^{(i)}.
\end{equation}To get $\vec^+(P^{(i+1)})$ from \eqref{P-(i+1)-2}, one must chose enough trajectories $X^{(i)}$ on intervals $[t_j,t_j+\Delta t_j]$ with $j=1,2,\cdots,N$ such that
\begin{equation}\label{vecP}
\vec^+(P^{(i+1)})=(\mathbb X^{(i)}\mathcal T)^{-1}\mathbb J^{(i)}.
\end{equation}
Finally, we obtain $P^{(i+1)}$ by taking the inverse map of $\vec^+(\cdot)$.

\subsection{Numerical Examples}
This section presents two numerical examples with dimensions 2 and 5, respectively. Firstly, let $n=2$ and $m=1$, and set 
\[
\begin{aligned}
A=\begin{bmatrix} 0.3 & 0.7 \\ -0.9 & 0.5 \end{bmatrix}, ~
B=\begin{bmatrix} 0.2 \\ 0 \end{bmatrix}, ~
C=\begin{bmatrix} 0.05 & 0.03 \\ 0.05 & 0.02 \end{bmatrix}, ~
D=\begin{bmatrix} 0.05 \\ 0.06 \end{bmatrix},\quad
\end{aligned}
\] 
and $x=(2, 3)^\top $. 
The coefficients in cost functional are chosen as
\[
 Q=\begin{bmatrix} 3 & 0 \\ 0 & 2 \end{bmatrix}, \quad
S={\bf O},\quad
 R=1.25. 
\]

%Different from the SDP method in \cite{Rami-Zhou-2000}, we obtain the optimal control using the data of state trajectories rather than solving the Riccati equation itself. 

By implementing Algorithm \ref{algorithm}, we only need state information without knowing the coefficient $A$ in the system. In the beginning, we initialize the stabilizer $K^{(0)}=(-8.3809, 7.4036)$. Here, $K^{(0)}$ can be chosen arbitrarily in $\mathcal X([A,C;B,D])$. Then, we read the data of state trajectory $X=(X_1, X_2)^\top$, which is presented in Fig. \ref{fig:subfig} (a).
%\begin{figure}[h!]
%\centering
%	\includegraphics[width=2.5in]{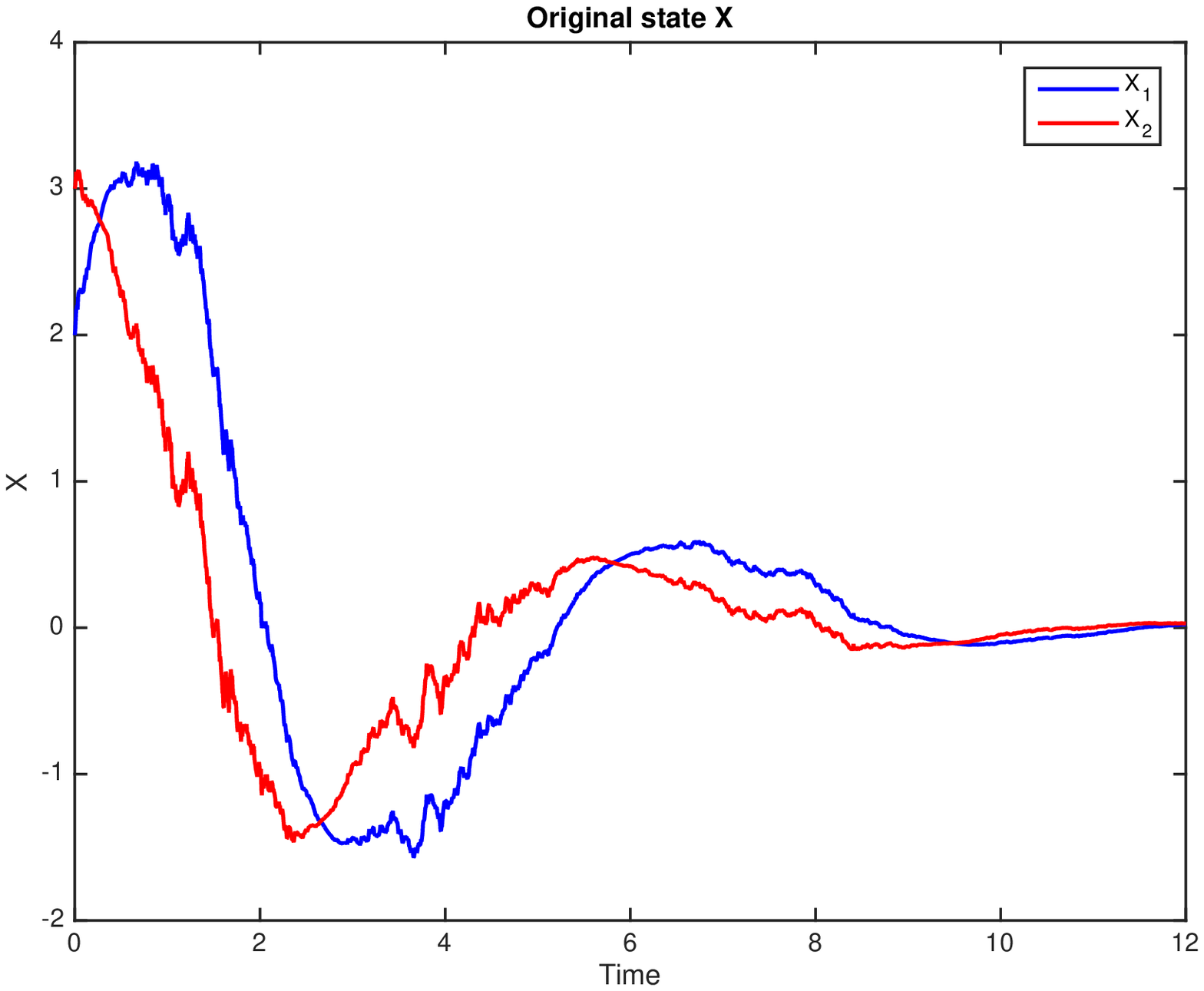}
%	\caption{System state trajectory $X$ initialized by an arbitrary stabilizer $K^{(0)}$.}
%	\label{fig1}
%\end{figure}
%

%
%\begin{figure*}[htbp]
%\centering
%\begin{minipage}[b]{1\textwidth}
%\subfigure[]{\includegraphics[width=0.32\textwidth]{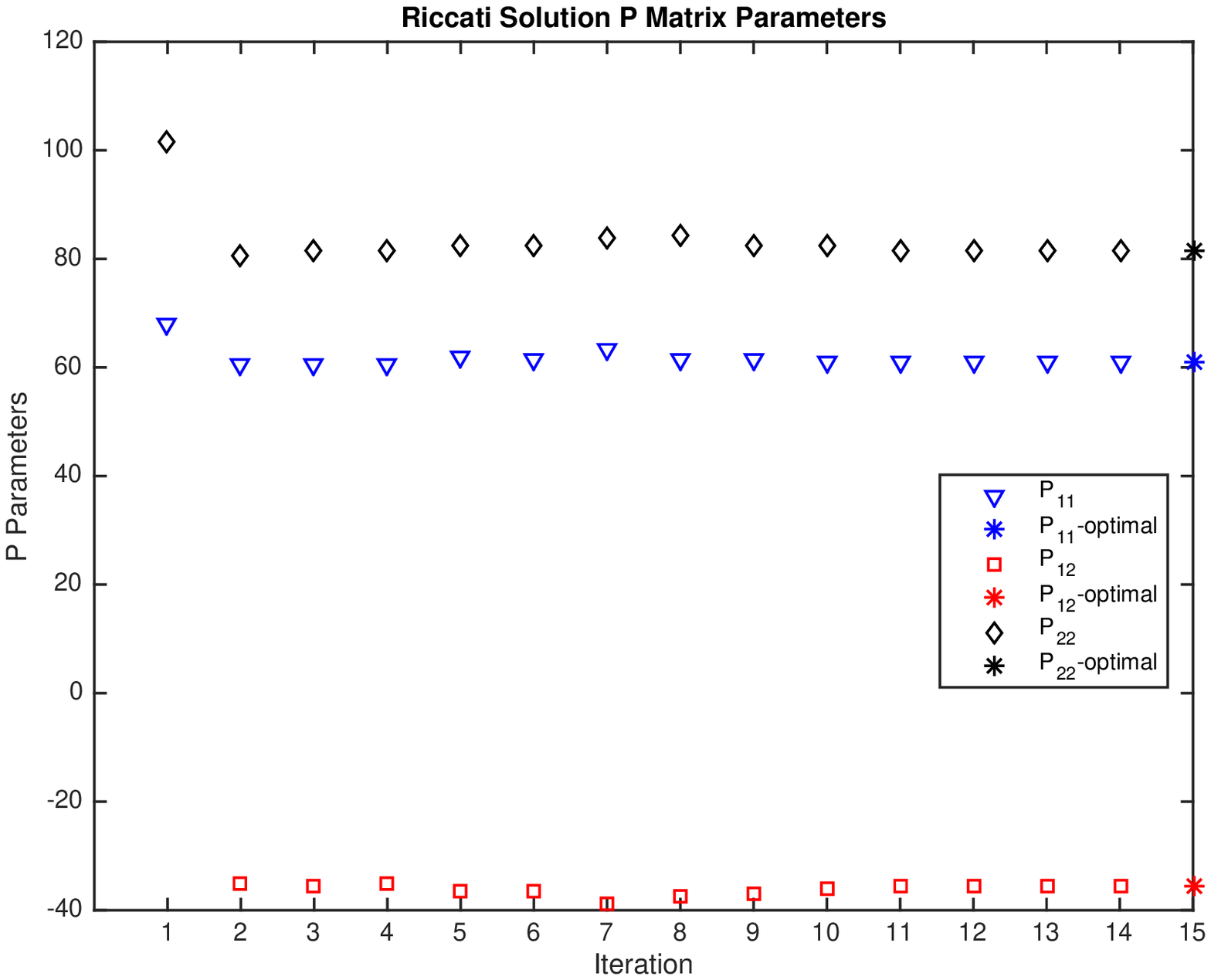}}\hspace{0.02in}
%\subfigure[]{\includegraphics[width=0.32\textwidth]{fig1-step2} }\vspace{0.02in}
%\subfigure[]{\includegraphics[width=0.32\textwidth]{fig1-step4} }\vspace{0.02in} 
% \end{minipage}
%\centering
%\caption{Simulation results. (a): Evolution of $P_1^*$ parameters; (b): Evolution of $P_2^*$ parameters; (c): Evolution of $P_4^*$ parameters.}
%\label{fig2}
%\end{figure*}
Because there are $N=\frac{n(n+1)}{2}=3$ independent parameters of $P$ in this example, we need to select $3$ intervals $[0,1], [1,2]$ and $[2,3]$ to reinforce target function $\Delta J^{(i)}(t_j,t_j+\Delta t_j;X^{(i)},K^{(i)})$ $(t_j=0,1,2, \Delta t_j=1)$ defined by \eqref{target function}. By implementing Algorithm \ref{algorithm}, we calculate $P^{(i+1)}$ by \eqref{vecP} and obtain 

$$P^*=\begin{bmatrix} 61.1422 & -35.7578 \\ -35.7578 & 81.6610 \end{bmatrix}$$
after $12$ iterations in { $5$ seconds}, please see details in Fig. \ref{fig:subfig} (b). 
%\begin{figure}[h!]
%\centering
%	\includegraphics[width=2.5in]{fig1-step1}
%	\caption{Evolution of $P^*$ parameters.} 
%	\label{fig2}
%\end{figure}

We denote the left hand side of \eqref{SARE} as
\begin{multline}\label{calR}
	\mathcal R(P)=A^\top P+PA^\top+C^\top PC+Q\\-(PB+C^\top PD+S^\top)(R+D^\top P D)^{-1}(B^\top P+D^\top PC+S ).
\end{multline} 
It is used to measure the distance from $P^*$ to the real solution of SARE \eqref{SARE}. 
Insert $P^*$ in to \eqref{calR}, we have 
$$\mathcal R(P^*)=10^{-4}
\cdot\begin{bmatrix} 0.6829 & 0.1212 \\ 0.1212 & -0.7346 \end{bmatrix}.$$ 
Moreover, $\|\mathcal R(P^*)\|=1.0175\times10^{-4}$. We choose the optimal control as $u^*=K^*X^*=-(R+D^\top P^*D)^{-1}(B^\top P^*+D^\top P^*C)X^*=(-8.3854, 4.7642)X^*$. The optimal trajectory $X^*=(X^*_1,X^*_2)^\top$ under the optimal control $u^*=K^*X^*$ is presented in Fig. \ref{fig:subfig} (c).

%\begin{figure}[h!]\label{P}
%\centering
%	\includegraphics[width=2.5in]{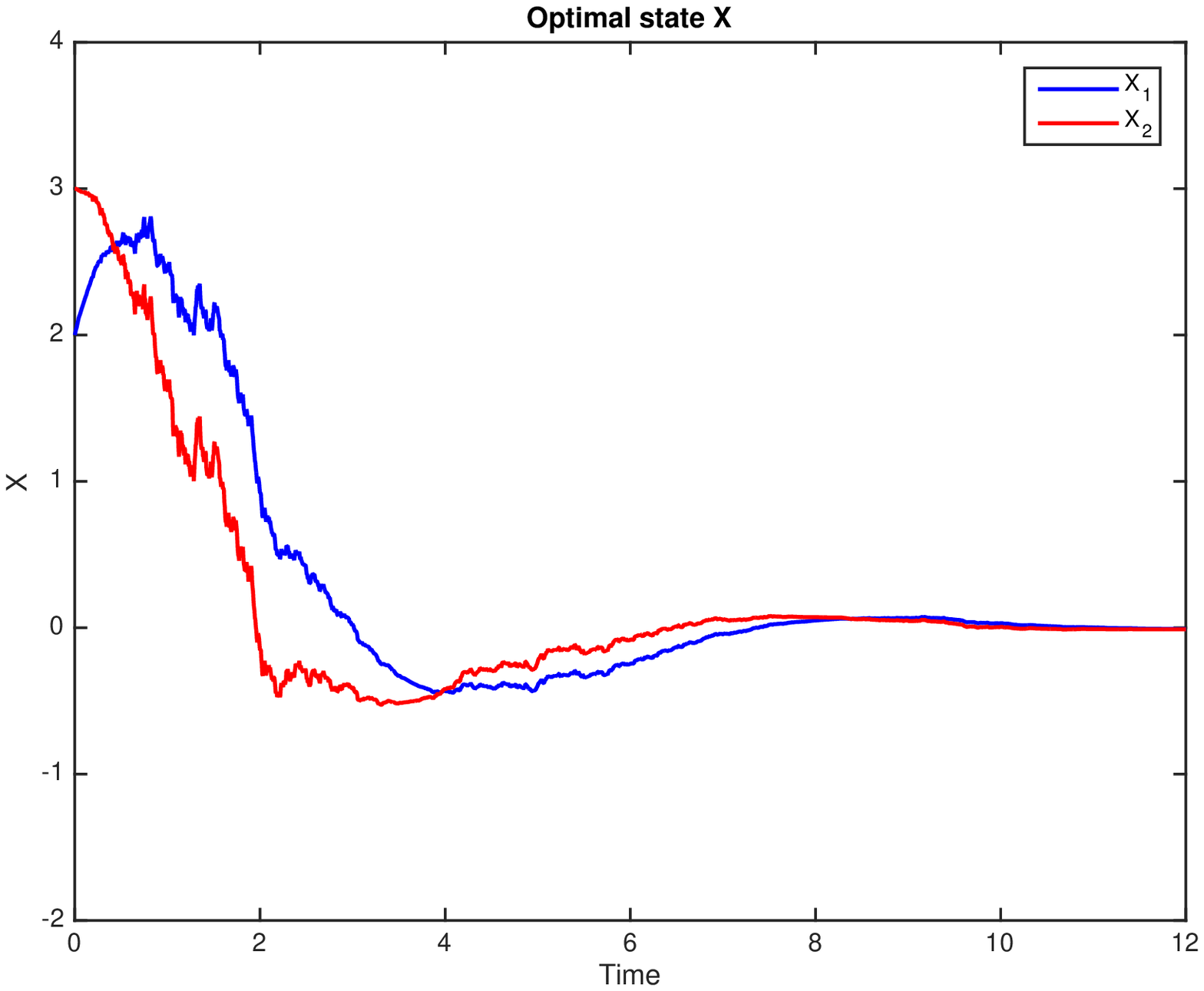}
%	\caption{The optimal state trajectory $X^*$ with optimal control $u^*=K^*X^*$.}
%	\label{fig3}
%\end{figure} 

 Now, we compare our result with an model-based approach, which involves two steps: first obtain an estimation of $A$ and then solve the Riccati equation by the SDP method in \cite{Rami-Zhou-2000}. Firstly, we use the least-square method to approximate $A$ by the trajectory data on the time interval $[0,n^2]$ based on $x_{k+1} = x_k + \int_{t_k}^{t_{k+1}}(A+BK)X(s)ds+\int_{t_k}^{t_{k+1}}(C+DK)X(s)dW(s)$. The estimation procedure is as follows. 
%\[
%\begin{aligned}
%	x_{k+1}&= x_k +\int_{t_k}^{t_{k+1}}(A+BK)X(s)ds\\
%	&+\int_{t_k}^{t_{k+1}}(C+DK)X(s)dW(s),
%\end{aligned}
%\] 
\begin{enumerate}
	\item Select $K = -(D^\top D)^{-1}D^\top C$;
	\item Read the data $\{x_{k}\}_{k=0}^{n^2}$ at time $t_k = \frac{k}{n^2}, k = 0, 1,2,3, \cdots, n^2$;
	\item Define $y_k = (x_{k+1}-x_k)/(t_{k+1}-t_k)$, and note that \\$~~~~~ Y=(y_0,\cdots,y_{n^2-1}), ~~~ X=(x_0,\cdots,x_{n^2-1})$;
	\item Estimate $\widehat A= YX^\top(XX^\top)^{-1} - BK$.
\end{enumerate}

%\begin{algorithm}\label{al2}\caption{\small {\it Approximation of $A$}}
%1. Let $K = -(D^\top D)^{-1}D^\top C$; \\
%2. Let $t_k = \frac{k}{N}, k = 0, 1, \cdots, N$ for initial $x_0$ \\
%
%3. Define $y_k = x_{k+1}-x_k$, and note that \\
%$~~~~~ Y=(y_0,\cdots,y_{N-1}), ~~~ X=(x_0,\cdots,X_{N-1})$; \\
%4. Estimate $\widehat A= YX^\top(XX^\top)^{-1}/(t_{k+1}-t_k) - BK$. 
%\end{algorithm}
{ By the above procedure, we obtain an approximation of $A$ as 
$$\widehat A=\begin{bmatrix} 0.2984 & 0.7015 \\ -0.9036 & 0.4988 \end{bmatrix}$$ in $10$ seconds}, 
please see details in Fig. \ref{fig:subfig} (d). 
%\begin{figure}[h!]
%\centering
%	\includegraphics[width=2.5in]{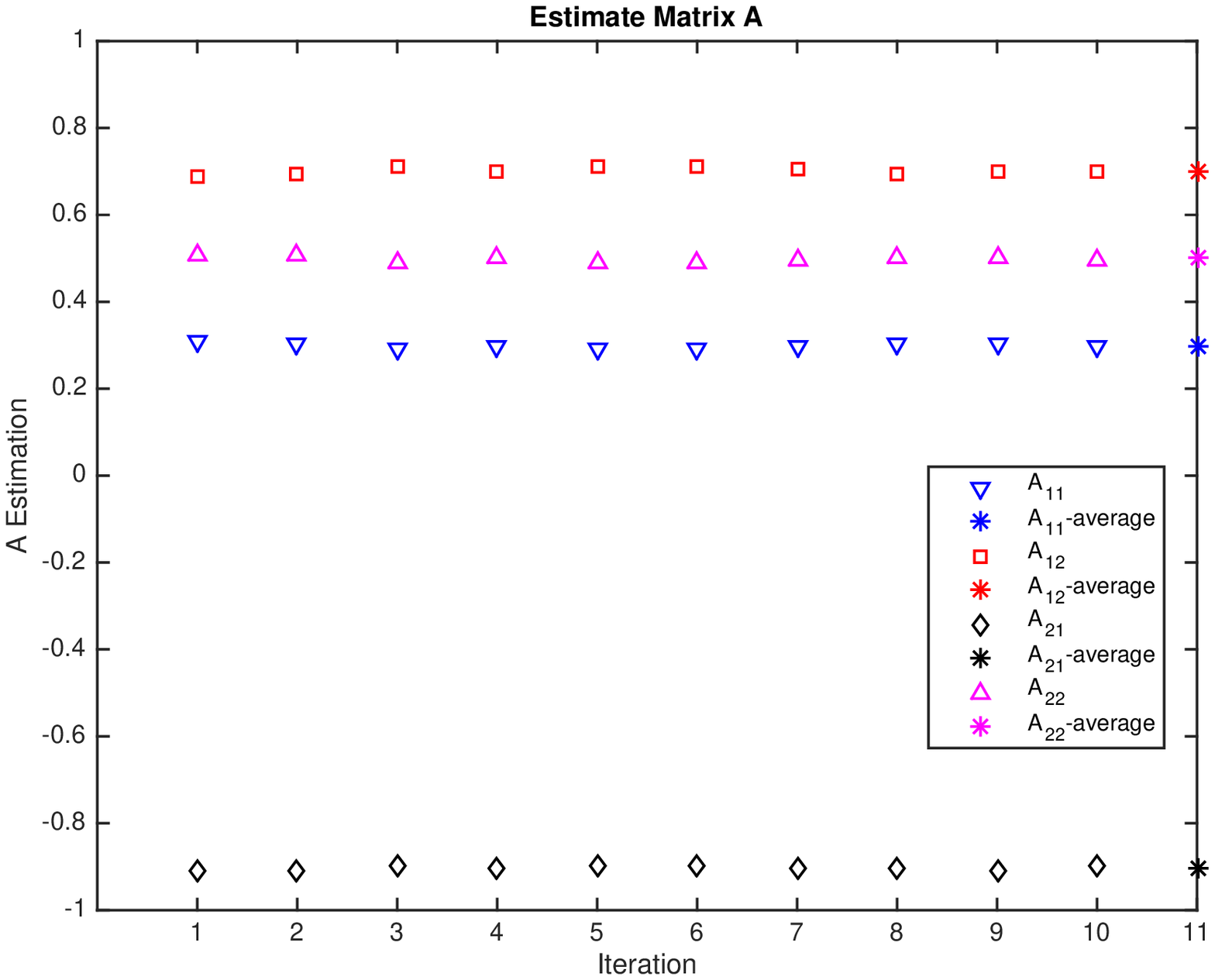}
%	\caption{Evolution of $\widehat A$ parameters.} 
%	\label{fig4}
%\end{figure}
Secondly, using the SDP method in \cite{Rami-Zhou-2000}, we obtain $$\widehat P^*=\begin{bmatrix} 60.8975 & -35.4013 \\ -35.4013 & 80.8154 \end{bmatrix}$$ in $2$ seconds with $$\mathcal R(\widehat P^*)=\begin{bmatrix} 0.2856 & -0.1789 \\ -0.1789 & 0.2629 \end{bmatrix}$$ and $\|\mathcal R(\widehat P^*)\|=0.4633$. Comparing the proposed partially model-free method in this paper to the above model-based method, the former is more effective than the latter in time consumption and accuracy.

\begin{figure*}[htbp]
\centering

\begin{minipage}[b]{1\textwidth}
\subfigure[]{\includegraphics[width=0.33\textwidth]{state-original}}\hspace{0.02in}
\subfigure[]{\includegraphics[width=0.33\textwidth]{fig1-step1} }\vspace{0.02in}
\subfigure[]{\includegraphics[width=0.33\textwidth]{state-optimal} }\vspace{0.02in} 
\end{minipage}

\begin{minipage}[b]{1\textwidth}
\subfigure[]{\includegraphics[width=0.33\textwidth]{A-est}}\hspace{0.02in}\label{aa}
\subfigure[]{\includegraphics[width=0.33\textwidth]{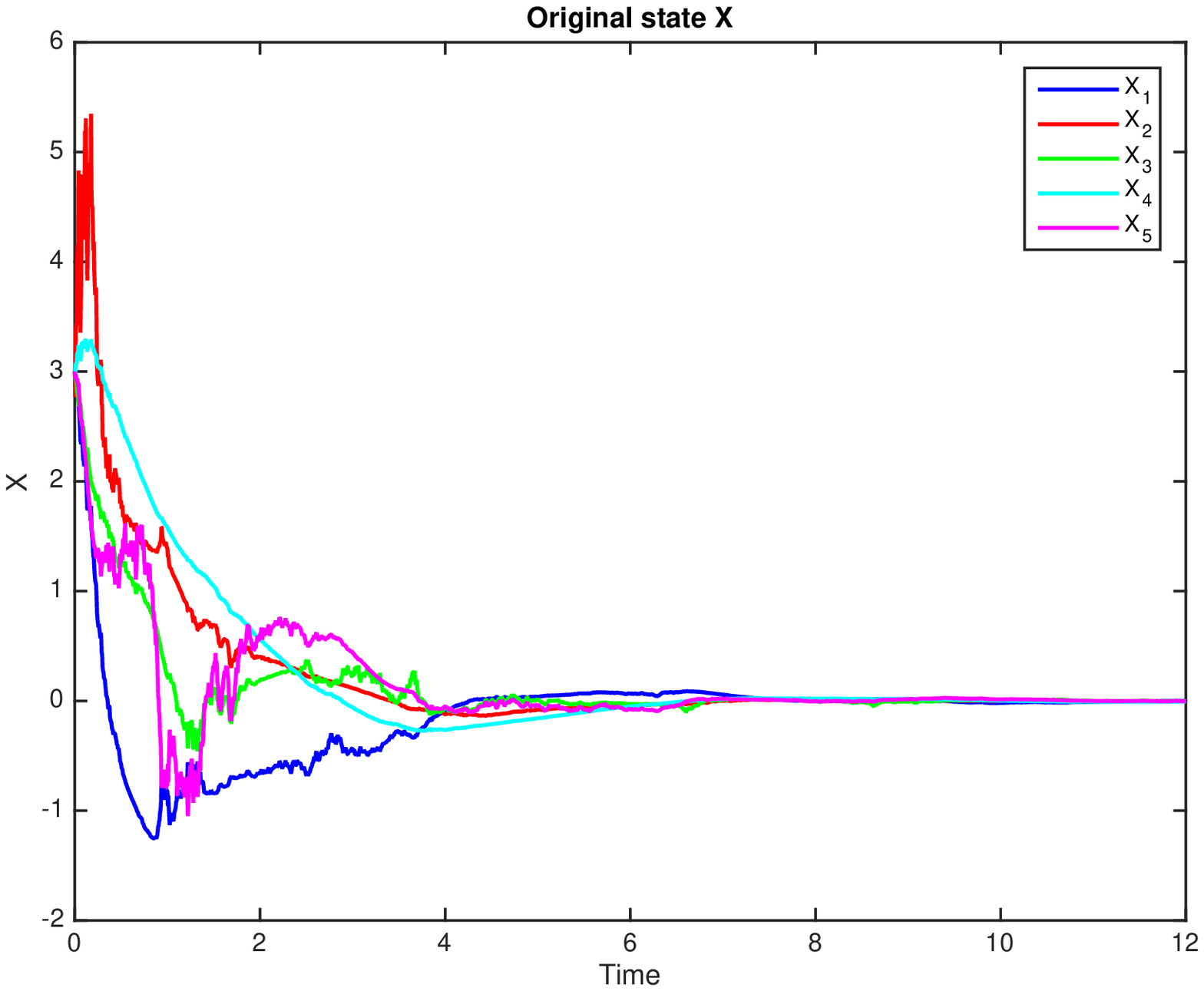} }\vspace{0.02in}
\subfigure[]{\includegraphics[width=0.33\textwidth]{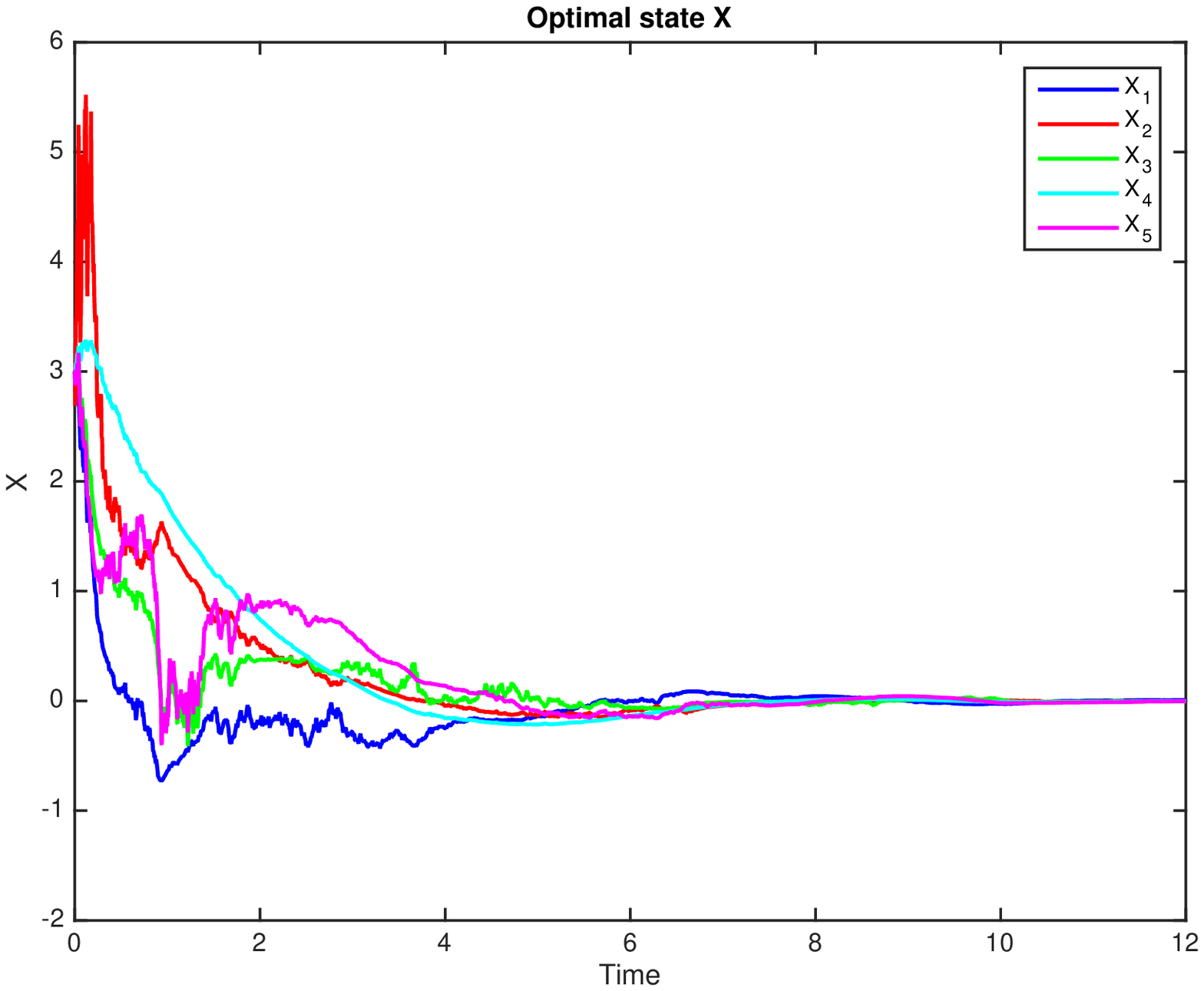} }\vspace{0.02in} 
\end{minipage}

\caption{Simulation results for solutions. (a): System state trajectory $X$ initialized by an arbitrary stabilizer $K^{(0)}$ in $2$-dimensional case; (b): Evolution of $P^*$ parameters in $2$-dimensional case; (c): The optimal state trajectory $X^*$ with optimal control $u^*=K^*X^*$ in $2$-dimensional case; (d): Evolution of $\widehat A$ parameters in $2$-dimensional case; (e): System state trajectory $X$ initialized by an arbitrary stabilizer $K^{(0)}$ in $5$-dimensional case; (f): The optimal state trajectory $X^*$ with optimal control $u^*=K^*X^*$ in $5$-dimensional case.}
\label{fig:subfig} %% label for entire figure
\end{figure*}

Following \cite{Rami-Zhou-2000}, we consider an example with $n=5$, $m=2$, and $R=I$. To save space, we do not present the parameters of the problem, please see details in \cite{Rami-Zhou-2000}. Firstly, we initialize a 
stabilizer $$K^{(0)}=\begin{bmatrix} 
-0.2038 & 0.1082 & 0.3309 & -0.5508 & -0.2534 \\
0.3653 & 0.7350 & 0.8492 & -0.8452 & -0.5654
\end{bmatrix}.$$ 
Then, we read the data of state trajectory $X=(X_1, X_2, X_3, X_4, X_5)^\top$ with $K^{(0)}$, which is presented in Fig. \ref{fig:subfig} (e). 
%\begin{figure}[h!]
%\centering
%	\includegraphics[width=2.5in]{state-original-5dim}
%	\caption{System state trajectory $X$ initialized by an arbitrary stabilizer $K^{(0)}$. }
%	\label{fig5}
%\end{figure}
By Algorithm \ref{algorithm}, we get 
$$P^*=\begin{bmatrix} 
0.3684 & 0.3093 & 0.2112 & 0.0272 & -0.3673 \\
0.3093 & 1.3393 & 1.2834 & -1.0028 & -0.7576 \\
0.2112 & 1.2834 & 1.6839 & -1.2101 & -0.7842 \\
0.0272 & -1.0028 & -1.2101 & 2.0571 & 0.0801 \\
-0.3673 & -0.7576 & -0.7842 & 0.0801 & 1.6467
\end{bmatrix}$$
after $15$ iterations in $7$ seconds with
%\begin{figure}[h!]
%\centering
%	\includegraphics[width=3in]{P5}
%	\caption{{\color{red}Evolution of $P^*$ parameters.} }
%	\label{fig2}
%\end{figure} 
$$\small{\mathcal R(P^*)=10^{-3}\cdot\begin{bmatrix} 
-0.2442 & 0.1120 & -0.0145 & 0.0413 & 0.0391 \\
0.1120 & 0.0245 & -0.1813 & 0.0381 & 0.0287 \\
-0.0145 & -0.1813 & 0.3078 & 0.0743 & -0.0869 \\
0.0413 & 0.0381 & 0.0743 & -0.1005 & 0.0014 \\
0.0391 & 0.0287 & -0.0869 & 0.0014 & 0.2771
\end{bmatrix},}$$
and $\|\mathcal R(P^*)\|=6.0846\times10^{-4}$. Comparatively, the accuracy of the corresponding result in \cite{Rami-Zhou-2000} is $\|\mathcal R(P)\|=2.6\times10^{-9}$. Because our RL method does not use any information of $A$ while the SDP method uses all the information of the system, the accuracy of the former is lower than the latter. The optimal control is $u^*=K^*X^*$, 
where
$$K^*=\begin{bmatrix} 
-0.3358 & -0.1425 & -0.0369 & 0.1088 & -0.0435 \\
0.4476 & 0.8551 & 0.9530 & -0.8889 & -0.3945
\end{bmatrix}.$$ 
The optimal state $X^*=(X^*_1,X^*_2, X^*_3, X^*_4, X^*_5)^\top$ is presented in Fig. \ref{fig:subfig} (f).

\ifCLASSOPTIONcaptionsoff
\newpage
\fi

% trigger a \newpage just before the given reference
% number - used to balance the columns on the last page
% adjust value as needed - may need to be readjusted if
% the document is modified later
%\IEEEtriggeratref{8}
% The "triggered" command can be changed if desired:
%\IEEEtriggercmd{\enlargethispage{-5in}}

% references section

% can use a bibliography generated by BibTeX as a .bbl file
% BibTeX documentation can be easily obtained at:
% http://www.ctan.org/tex-archive/biblio/bibtex/contrib/doc/
% The IEEEtran BibTeX style support page is at:
% http://www.michaelshell.org/tex/ieeetran/bibtex/
%\bibliographystyle{IEEEtran}
% argument is your BibTeX string definitions and bibliography database(s)
%\bibliography{IEEEabrv,../bib/paper}
%
% <OR> manually copy in the resultant .bbl file
% set second argument of \begin to the number of references
% (used to reserve space for the reference number labels box)

\end{document}